\newtheorem{theorem}{Theorem}[section]
\theoremstyle{plain}
\newtheorem{corollary}[theorem]{Corollary}
\newtheorem{lemma}[theorem]{Lemma}
\newtheorem{proposition}[theorem]{Proposition}
\newtheorem{note}[theorem]{Note}
\newcommand{\cl}{\overline}
\newcommand{\B}{\mathcal{B}}
\newcommand{\U}{\mathcal{U}}
\newcommand{\V}{\mathcal{V}}
\newcommand{\F}{\mathcal{F}}
\newcommand{\K}{\mathcal{K}}
\newcommand{\Po}{\mathcal{P}}
\newcommand{\m}{\mathfrak{m}}
\newcommand{\n}{\mathfrak{n}}
\long\def\symbolfootnote[#1]#2{\begingroup\def\thefootnote{\fnsymbol{footnote}}\footnote[#1]{#2}\endgroup}
\title{Topologies of (strong) uniform convergence on bornologies}
\author{L\!'ubica Hol\'a and Branislav Novotn\'y}
\address{Mathematical Institute, Slovak Academy of Sciences, \v Stef\'anikova 49, SK-814 73 Bratislava, Slovakia}
\email{lubica.hola@mat.savba.sk, branislav.novotny@mat.savba.sk}
\keywords{bornology, cardinal invariant, topology of strong uniform convergence}
\subjclass[2010]{54A25, 54C05, 54C30, 54C35}
\begin{document}
\maketitle
\begin{abstract}
	We continue the study of topologies of strong uniform convergence on bornologies initiated  in 
	[G.~Beer and S.~Levi, \newblock Strong uniform continuity, \newblock {\em J. Math Anal. Appl.}, 350:568--589, 2009]
	and
	[G.~Beer and S.~Levi, \newblock Uniform continuity, uniform convergence and shields, \newblock {\em Set-Valued and Variational Analysis}, 18:251--275, 2010].
	We study cardinal invariants of topologies of (strong) uniform convergence on bornologies on the space of continuous real-valued functions and we also generalize some known results from the literature.
\end{abstract}

\section{Introduction}\label{introduction}

\bigskip

	Topologies of strong uniform convergence on bornologies were introduced by Beer and Levi in their paper \cite{beerlevi1} and then studied also in 
	\cite{beerlevi2}, \cite{casdiho} and \cite{hola}.
Let $\B$ be a bornology in a metric space $(X,\rho)$, that is, a cover of $X$ that also forms an ideal. In \cite{beerlevi1} authors introduced the variational notions of strong uniform continuity of a function on $\B$ as an alternative to uniform continuity of the restriction of the function to each member of $\B$, and the topology of strong uniform convergence on $\B$ as an alternative to the classical topology of uniform convergence on $\B$. In \cite{beerlevi2} the authors continued this study, showing that shields play a pivotal role. Shields are successfully used also in our paper.

	In our paper we study topologies of (strong) uniform convergence on bornologies from the point of view of cardinal invariants (like character, cellularity, density, network weight, weight) extending some results from \cite{beerlevi1} and \cite{casdiho}.
	We use shields in our estimations of density and weight.
	
	Notice that cardinal invariants of $C_\alpha(X)$, the set-open topology on the space of continuous real-valued functions defined on a topological space $X$, were studied in [R.A. McCoy and I.~Ntantu, \newblock {\em Topological Properties of Spaces of Continuous Functions}, \newblock Springer-Verlag, 1988] only for a hereditarily closed compact network $\alpha$. Such topology is a special case of a topology of uniform convergence on a bornology with a compact base.
	Thus our results extend also some results from this book.

A system $\B$ of subsets of $X$ is called \emph{bornology} on $X$ if it fulfills the following properties:
\[(b1)\ \text{if }B_1\in\B \text{ and }B_2\subset B_1 \text{, then } B_2\in\B,\]
\[(b2)\ \text{if }B_1,B_2\in\B \text{, then } B_1\cup B_2\in\B,\]
\[(b3)\ \bigcup\B=X.\]

	Every system $\mathcal A$ generates the smallest bornology on $\bigcup\mathcal A$ containing $\mathcal A$.
A system $\B_0\subset\B$ is a base of a bornology $\B$ if $\B_0$ is cofinal in $\B$, with respect to $\subset$; i.e. for every $B\in\B$ there is $B_0\in\B_0$ with $B\subset B_0$.
	We will define the \emph{cofinality} of $\B$ by \[cf(\B)=\aleph_0+\min\{|\B_0|; \B_0\text{ is a base of }\B\}.\]

For a bornology $\B$ on a topological space $(X,\tau)$ we can introduce another condition:
\[(b4)\ B\in\B \text{, then } \cl{B}\in\B.\]
We will say that a bornology $\B$ has a \emph{closed (compact) base} iff it has a base consisting of closed (compact) sets. Note that $(b4)$ is equivalent to the condition that $\B$ has a closed base. The smallest bornology containing the bornology $\B$ fulfilling $(b4)$ is the one generated by the system $\{\cl{B}; B\in\B\}$, we will denote it $\cl{\B}$.

Now let $(X,\rho)$ and $(Y,\eta)$ be metric spaces and $\B$ be a bornology on $X$. Denote by $B^\delta$ an open $\delta$ enlargement of the set $B\subset X$.
We will be interested in the topologies $\tau_\B$ and $\tau^s_\B$ on $C(X,Y)$, the space of continuous functions from $X$ to $Y$, where $\tau_\B$ is generated by the uniformity with basic entourages of the form (see \cite{mccoy})
\[[B,\epsilon]=\{(f,g);\text{ for every }x\in B\ \eta(f(x),g(x))<\epsilon\}\quad B\in\B,\ \epsilon>0\]
and $\tau^s_\B$ is generated by the uniformity with basic entourages of the form
\[[B,\epsilon]^s=\{(f,g);\exists\delta>0 \text{ for every }x\in B^\delta\ \eta(f(x),g(x))<\epsilon\}\quad B\in\B,\ \epsilon>0.\]
Note that in both cases bornologies $\B$ and $\cl{\B}$ generate the same uniformities on $C(X,Y)$, so we will be interested only in bornologies with a closed base.

Examples of bornologies on a Hausdorff topological space $(X,\tau)$ with a closed base are $\F$ - the system of all finite subsets of $X$, $\Po$ - the system of all subsets of $X$, $\mathcal K$ - the system of all subsets of $X$ with a compact closure. For every bornology $\B$ on $X$ holds $\F\subset\B\subset\Po$.
Note that if $\B\subset\K$ (i.e. $\B$ has a compact base) then $\tau_\B=\tau_\B^s$ (see \cite[Corollary 6.6]{beerlevi1}).

\section{Cardinal Functions Depending on a Bornology}

For every cardinal number $\m$ denote by $\m^+$ its cardinal successor. Let $\omega=\omega_0\setminus\{0\}$ i.e. the set of positive integers.

Let $(X,\tau)$ be a Hausdorff space and $\B$ be a bornology on $X$ with a closed base.
Put $o(X)=|\tau|$.
The following notions are from \cite{mccoy}. We will stick to the names but use the notation which is more appropriate in this context.

A system $\gamma\subset\Po$ is a \emph{$\B$-network} on $X$ if for every $B\in\B$ and $U\in\tau$ such that $\cl{B}\subset U$ there is $C\in\gamma$ with $\cl{B}\subset C\subset U$. Define the \emph{$\B$-network weight} of $X$ by
$$nw(X,\B)=\aleph_0+\min\{|\gamma|;\gamma\text{ is a }\B\text{-network on }X\}$$
and the\emph{ weak $\B$-covering number} of $X$ by
$$d(X,\B)=\aleph_0+\min\{|\gamma|;\gamma\subset\B,\ \cl{\bigcup\gamma}=X\}.$$
A system $\U\subset\tau$ is an \emph{open $\B$-cover} of $X$ if for every $B\in\B$ there is $U\in\U$ such that $B\subset U$.
The \emph{$\B$-Lindel\"of degree} of $X$ is defined as
\begin{multline*}
L(X,\B)=\aleph_0+\min\{\m;\text{every open }\B\text{-cover of }X\text{ has an open }\B\text{-subcover of }X\\ \text{ with cardinality}\leq\m\}.
\end{multline*}
If we consider the bornology $\F$ the above mentioned cardinal invariants reduce to well known ones, particularly $nw(X,\F)=nw(X)$, the network weight of $X$, $d(X,\F)=d(X)$, the density of $X$ and $L(X,\F)=L(X)$, the Lindel\"of degree of $X$.
Remind that a topological space is called \emph{$\m$-compact} if every open cover of $X$ has an open subcover with cardinality $<\m$ (see \cite{juhasz}). Define \emph{compactness degree} of $X$ by
\[\delta(X)=\aleph_0+\min\{\m;X\text{ is }\m\text{-compact}\}\]
as suggested in \cite[1.8.]{juhasz}, and \emph{compactness degree} of $\B$ by
\[\delta(X,\B)=\sup\{\delta(\cl{B});B\in\B\}.\]
Note that $\delta(X,\Po)=\delta(X)$ and $L(X)\leq\delta(X)\leq L(X)^+$.
Similarly we can define
\[\delta_D(X)=\aleph_0+\min\{\m;\forall\text{closed discrete }D\subset X\text{ is }|D|<\m\},\]
\[\delta_D(X,\B)=\sup\{\delta_D(\cl{B});B\in\B\}=\aleph_0+\min\{\m;\forall\text{closed discrete }D\in\B\text{ is }|D|<\m\}.\]
Recall that $e(X)=\aleph_0+\sup\{|D|;D \text{ is a closed discrete subset of  } X\}$ is called an \emph{extent} of $X$ and observe that $e(X)\leq\delta_D(X)\leq e(X)^+$ and $\delta_D(X,\B)\leq\delta(X,\B)$.

Now let $(X,\rho)$ be a metric space and $\B$ be a bornology on $X$ with a closed base.
A system of open sets $\U$ is a \emph{strong open $\B$-cover} of $X$ if for every $B\in\B$ there is $U\in\U$ and $\delta>0$ such that $B^\delta\subset U$. The \emph{strong $\B$-Lindel\"of degree} of $X$ is defined as
\begin{multline*}
L^s(X,\B)=\aleph_0+\min\{\m;\text{every strong open }\B\text{-cover of }X\\ \text{ has a strong open }\B\text{-subcover of }X \text{ with cardinality}\leq\m\}.
\end{multline*}

A metric space $X$ is called \emph{compact in generalized sense (or GK)}, see \cite{barcos}, if every open cover of $X$ has a subcover with cardinality less than $d(X)$. The following proposition is a consequence of \cite[Theorem 7]{barcos}
\begin{proposition}\label{delta}
	Let $(X,\rho)$ be a metric space and $\B$ be a bornology with a closed base, then $\delta(X)=\delta_D(X)$, $\delta(X,\B)=\delta_D(X,\B)$, if $X$ is GK then $\delta(X)=d(X)$ and if $X$ is not GK then $\delta(X)=d(X)^+$.	
\end{proposition}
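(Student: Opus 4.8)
The plan is to reduce the whole proposition to two facts already in hand plus one external input. The facts in hand are the inequalities $L(X)\le\delta(X)\le L(X)^+$ and $e(X)\le\delta_D(X)\le e(X)^+$ recorded above, and the classical fact that for a metrizable space $e(X)=L(X)=d(X)=nw(X)=w(X)$; the only equality I really need is $e(X)=d(X)$, which one can also see by picking, for each $n\in\omega$, a maximal set $D_n\subseteq X$ whose points are pairwise more than $1/n$ apart: every such $D_n$ is closed and discrete, hence $|D_n|\le e(X)$, while $\bigcup_nD_n$ is dense, so $d(X)\le\aleph_0\cdot e(X)=e(X)$; and $e(X)\le L(X)\le d(X)$ holds for all spaces. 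Write $\kappa$ for this common value. The displayed inequalities then force $\delta(X),\delta_D(X)\in\{\kappa,\kappa^+\}$, so the first assertion amounts to showing that these two invariants pick the same one of the two possible values.

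First I would prove $\delta_D(X)\le\delta(X)$ (valid in any Hausdorff space): if $X$ is $\m$-compact and $D\subseteq X$ is closed and discrete, cover $X$ by $X\setminus D$ together with open sets $V_d$ ($d\in D$) chosen with $V_d\cap D=\{d\}$; a subcover of cardinality $<\m$ must contain every $V_d$ (since $d$ lies in none of $X\setminus D$ and the $V_{d'}$ with $d'\ne d$), whence $|D|<\m$. Thus every $\m$ witnessing $\m$-compactness of $X$ also bounds the sizes of the closed discrete subsets of $X$, giving $\delta_D(X)\le\delta(X)$. For the reverse inequality it remains, by the dichotomy above, to treat the case $\delta_D(X)=\kappa$, i.e. every closed discrete subset of $X$ has cardinality $<d(X)$. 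Here I would invoke the characterization of GK metric spaces from \cite[Theorem 7]{barcos} to conclude that the metric space $X$ is GK, hence $d(X)$-compact, so that $\delta(X)\le d(X)=\kappa=\delta_D(X)$. This establishes $\delta(X)=\delta_D(X)$; the equality $\delta(X,\B)=\delta_D(X,\B)$ is then immediate, since each $\cl B$ ($B\in\B$) is itself a metric space, so $\delta(\cl B)=\delta_D(\cl B)$, and one takes the supremum over $B\in\B$.

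For the last two assertions I would just unwind the definitions. By definition $X$ is GK exactly when it is $d(X)$-compact, and (because $\m$-compactness is upward closed in $\m$) this is equivalent to $\delta(X)\le d(X)$. Since $\delta(X)\ge L(X)=d(X)$ always for metrizable $X$, it follows that $X$ GK forces $\delta(X)=d(X)$; and if $X$ is not GK then $\delta(X)>d(X)$, while $\delta(X)\le L(X)^+=d(X)^+$ always, so $\delta(X)=d(X)^+$.

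The main obstacle, and the only place where metrizability is used in an essential way, is the appeal to \cite[Theorem 7]{barcos}: the implication ``all closed discrete subsets have size $<\kappa$ $\Rightarrow$ $X$ is $\kappa$-compact'' fails for general spaces — for instance $\omega_1$ with the order topology has only finite closed discrete subsets yet is not compact — so it cannot be obtained by the soft cardinal-function manipulations that handle the rest of the argument. Everything outside that appeal is bookkeeping with the inequalities already isolated in this section.
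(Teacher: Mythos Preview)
Your argument is correct and follows essentially the same route the paper intends: the paper simply declares the proposition a consequence of \cite[Theorem~7]{barcos}, and you make this derivation explicit, isolating exactly where that external result enters (the implication ``every closed discrete subset has size $<d(X)$ $\Rightarrow$ $X$ is GK''); the remaining steps are the bookkeeping with $L$, $e$, $\delta$, $\delta_D$ that the paper leaves implicit. One small slip worth fixing: the inequality $L(X)\le d(X)$ does \emph{not} hold for all spaces (the Sorgenfrey plane is separable but not Lindel\"of), though it does hold for metrizable ones, which is all you actually need here.
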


\begin{proposition}\label{bornology_invariants}
	Let $(X,\tau)$ be a Hausdorff space and $\B$ be a bornology with a closed base, then $L(X,\B)\leq\min\{cf(\B),nw(X,\B)\}\leq o(X)$ and in the case of a metric space $(X,\rho)$ we also have $d(X,\B)\leq L^s(X,\B)\leq L(X,\B)$.
\end{proposition}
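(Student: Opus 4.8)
The plan is to dispatch the five inequalities one by one, the first three over a general Hausdorff space and the last two using the metric $\rho$.

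First I would prove $L(X,\B)\le cf(\B)$: fix a base $\B_0$ of $\B$ of cardinality $cf(\B)$; given an open $\B$-cover $\U$, choose $U_B\in\U$ with $B\subset U_B$ for each $B\in\B_0$, and observe that $\{U_B:B\in\B_0\}$ is again an open $\B$-cover (every $B\in\B$ lies inside some $B_0\in\B_0\subset U_{B_0}$) of cardinality $\le cf(\B)$. Next, $L(X,\B)\le nw(X,\B)$: fix a $\B$-network $\gamma$ of cardinality $nw(X,\B)$ and an open $\B$-cover $\U$; for $B\in\B$ use the closed base to pick a closed $B'\in\B$ with $B\subset B'$, then $U\in\U$ with $B'\subset U$, and apply the $\B$-network property to the pair $(B',U)$ — note $\cl{B'}=B'\subset U$ — to obtain $C\in\gamma$ with $B'\subset C\subset U$. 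Letting $\gamma'$ be the set of those $C\in\gamma$ lying inside some member of $\U$, and choosing $U_C\in\U$ with $C\subset U_C$ for each $C\in\gamma'$, the family $\{U_C:C\in\gamma'\}$ is an open $\B$-subcover of $\U$ of cardinality $\le nw(X,\B)$. Finally $\min\{cf(\B),nw(X,\B)\}\le o(X)$ is immediate, since $\tau$ itself is a $\B$-network (take $C=U$), whence $nw(X,\B)\le\aleph_0+|\tau|=o(X)$.

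Now assume $(X,\rho)$ is metric. For $L^s(X,\B)\le L(X,\B)$ I would introduce, for an open set $U\subset X$ and $\epsilon>0$, the open set $U[\epsilon]=\{x\in X:\rho(x,X\setminus U)>\epsilon\}\subset U$. Given a strong open $\B$-cover $\U$, the family $\mathcal W=\{U[1/n]:U\in\U,\ n\in\omega\}$ is an \emph{ordinary} open $\B$-cover: if $B^\delta\subset U$ then $\rho(b,X\setminus U)\ge\delta$ for every $b\in B$, so $B\subset U[1/n]$ as soon as $1/n<\delta$. Choosing an open $\B$-subcover $\{U_j[1/n_j]:j\in J\}$ of $\mathcal W$ with $|J|\le L(X,\B)$, one checks that $\{U_j:j\in J\}$ is a strong open $\B$-subcover of $\U$: for $B\in\B$, if $B\subset U_j[1/n_j]$ then $B(b,1/n_j)\subset U_j$ for every $b\in B$, i.e.\ $B^{1/n_j}\subset U_j$. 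This subcover has cardinality $\le L(X,\B)$, so $L^s(X,\B)\le L(X,\B)$.

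The remaining inequality $d(X,\B)\le L^s(X,\B)$ carries the one genuine idea and is where I expect the difficulty to lie. A naive attempt — fixing a single strong open $\B$-cover, extracting a strong subcover of size $L^s(X,\B)$, and reading off a dense subfamily of $\B$ — fails, because a single strong subcover may capture small sets through "coarse" enlargements that do not pin down points of $X$ finely enough. Instead I would work scale by scale: for each $n\in\omega$ the family $\mathcal{U}_n=\{B^{1/n}:B\in\B\}$ is a strong open $\B$-cover of $X$ (for $B'\in\B$ one has $B'^{1/(n+1)}\subset B'^{1/n}\in\mathcal{U}_n$), so it has a strong open $\B$-subcover, which we may write as $\{B^{1/n}:B\in\gamma_n\}$ with $\gamma_n\subset\B$ and $|\gamma_n|\le L^s(X,\B)$. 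Put $\gamma=\bigcup_{n\in\omega}\gamma_n$, so $|\gamma|\le L^s(X,\B)$. For $x\in X$ and any $n$: since $\{x\}\in\F\subset\B$ and $\{B^{1/n}:B\in\gamma_n\}$ strongly covers $\{x\}$, there is $B\in\gamma_n$ with $x\in B^{1/n}$, i.e.\ $\rho(x,B)<1/n$; as $n$ was arbitrary, $\rho\bigl(x,\bigcup\gamma\bigr)=0$, so $x\in\cl{\bigcup\gamma}$. Hence $\cl{\bigcup\gamma}=X$, and therefore $d(X,\B)\le|\gamma|\le L^s(X,\B)$, completing the proof.
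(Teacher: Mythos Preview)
Your argument is correct. The treatment of $L(X,\B)\le cf(\B)$, $L(X,\B)\le nw(X,\B)$, and $d(X,\B)\le L^s(X,\B)$ is essentially identical to the paper's; for $\min\{cf(\B),nw(X,\B)\}\le o(X)$ you check only $nw(X,\B)\le o(X)$, which suffices for the stated inequality, whereas the paper also notes $cf(\B)\le o(X)$ via the closed base.

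The one genuine difference is in $L^s(X,\B)\le L(X,\B)$. The paper works on the \emph{domain side}: for each $B\in\B$ it fixes $U_B\in\U$ and $\delta_B>0$ with $B^{2\delta_B}\subset U_B$, observes that $\{B^{\delta_B}:B\in\B\}$ is an ordinary open $\B$-cover, extracts a $\B$-subcover indexed by some $\B_0$ of size $\le L(X,\B)$, and then uses $(B')^{\delta_B}\subset B^{2\delta_B}\subset U_B$ whenever $B'\subset B^{\delta_B}$ to conclude that $\{U_B:B\in\B_0\}$ is a strong $\B$-subcover of $\U$. You work on the \emph{range side}: you shrink each $U\in\U$ to $U[1/n]$, pass to the ordinary open $\B$-cover $\{U[1/n]\}$, extract a $\B$-subcover, and then inflate back. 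The two arguments are dual (``fatten $B$'' versus ``erode $U$'') and equally short; the paper's version avoids introducing the auxiliary function $x\mapsto\rho(x,X\setminus U)$ and the edge case $U=X$, while your version has the mild advantage that the intermediate cover is indexed by $\U\times\omega$ rather than by all of $\B$, which makes the bookkeeping of the extracted subfamily slightly cleaner.
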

\begin{proof}
	Let $\U$ be an arbitrary open $\B$-cover of $X$, i.e. for every $B\in\B$ there is $U_B\in\U$ such that $B\subset U_B$. If $\B_0$ is a base of $\B$ then the system $\{U_B;B\in\B_0\}$ is an open $\B$-subcover of $X$, so $L(X,\B)\leq cf(\B)$. If $\gamma$ is a $\B$-network on $X$ with $|\gamma|\leq nw(X,\B)$ then for every closed $B\in\B$ we have $C_B\in\gamma$ such that $B\subset C_B\subset U_B$. Let $\gamma_0=\{C_B;B\in\B,\ B\text{ closed}\}$ thus for every $C\in\gamma_0$ there is $V_C\in\U$ with $C\subset V_C$. The system $\{V_C;C\in\gamma_0\}$ is an open $\B$-subcover of $X$ and since $|\gamma_0|\leq|\gamma|$ we have $L(X,\B)\leq nw(X,\B)$. Inequality $cf(\B)\leq o(X)$ follows from the fact that $\B$ has a closed base and $nw(X,\B)\leq o(X)$ is trivial.
	
	Now suppose that $(X,\rho)$ is a metric space. For the first inequality observe that for every $n\in\omega$ the system $\{B^{1/n}; B\in\B\}$ is a strong open $\B$-cover of $X$ hence there is $\B_n\subset\B$ such that $|\B_n|\leq L^s(X,\B)$ and $\{B^{1/n}; B\in\B_n\}$ is a strong open $\B$-cover of $X$ so $X=\bigcup\{B^{1/n}; B\in\B_n\}$. Now put $\B_0=\bigcup_{n\in\omega}\B_n$ and we have $X=\cl{\bigcup\B_0}$. For the second inequality take an arbitrary $\U$ a strong open $\B$-cover of $X$, i.e. for every $B\in\B$ there is $\delta_B>0$ and $U_B\in\U$ such that $B^{2\delta_B}\subset U_B$. Since $\{B^{\delta_B};B\in\B\}$ is an open $\B$-cover of $X$ there is $\B_0\subset\B$ such that $|\B_0|\leq L(X,\B)$ and $\{B^{\delta_B};B\in\B_0\}$ is an open $\B$-cover of $X$, hence $\{U_B;B\in\B_0\}$ is a strong open $\B$-cover of $X$.
\end{proof}
\begin{proposition}\label{nwx}
	Let $(X,\tau)$ be a Hausdorff space and $\B$ be a bornology with closed base, then 
	$2^{<\delta_D(X,\B)}\leq nw(X,\B)\leq w(X)^{<\delta(X,\B)}$.
\end{proposition}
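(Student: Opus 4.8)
\emph{Overview.} The plan is to prove the two inequalities separately by direct constructions. Throughout I use that a bornology with a closed base satisfies $(b4)$, so $\cl{B}\in\B$ for every $B\in\B$, and that every subset of a closed discrete set is again closed (and discrete) in a Hausdorff space.

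\emph{Lower bound} $2^{<\delta_D(X,\B)}\leq nw(X,\B)$. Fix a $\B$-network $\gamma$ on $X$ with $|\gamma|=nw(X,\B)$ and an arbitrary closed discrete $D\in\B$; I claim $|\gamma|\geq 2^{|D|}$. The key point is that for each $A\subset D$ both $A$ and $D\setminus A$ are closed, so $U_A:=X\setminus(D\setminus A)$ is open with $U_A\cap D=A$ and $A=\cl{A}\subset U_A$; moreover $A\in\B$ by $(b1)$. Applying the defining property of a $\B$-network to the pair $\cl{A}\subset U_A$ yields some $C_A\in\gamma$ with $A\subset C_A\subset U_A$, whence $C_A\cap D=A$. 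Hence $A\mapsto C_A$ is an injection of $\Po(D)$ into $\gamma$, so $|\gamma|\geq 2^{|D|}$. Taking the supremum over all such $D$, and recalling that $\delta_D(X,\B)$ is by definition the least cardinal (at least $\aleph_0$) strictly above every such $|D|$, gives $nw(X,\B)\geq 2^{<\delta_D(X,\B)}$; the case of finite $D$ is harmless because $nw(X,\B)\geq\aleph_0$.

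\emph{Upper bound} $nw(X,\B)\leq w(X)^{<\delta(X,\B)}$. Fix a base $\mathcal{O}$ of $X$ with $|\mathcal{O}|=w(X)$ and let $\gamma$ be the family of all unions $\bigcup\mathcal{O}'$ with $\mathcal{O}'\subset\mathcal{O}$ and $|\mathcal{O}'|<\delta(X,\B)$. To see that $\gamma$ is a $\B$-network, take $B\in\B$ and open $U\supset\cl{B}$. Since $\cl{B}\in\B$ is closed, it is $\delta(\cl{B})$-compact, hence $\delta(X,\B)$-compact; covering $\cl{B}$ by members of $\mathcal{O}$ contained in $U$ (possible as $\mathcal{O}$ is a base and $\cl{B}\subset U$) and extracting a subcover $\mathcal{O}'$ with $|\mathcal{O}'|<\delta(X,\B)$, the set $C:=\bigcup\mathcal{O}'$ lies in $\gamma$ and satisfies $\cl{B}\subset C\subset U$. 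Counting, $|\gamma|\leq\bigl|[\mathcal{O}]^{<\delta(X,\B)}\bigr|\leq\delta(X,\B)\cdot w(X)^{<\delta(X,\B)}$.

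\emph{Main obstacle.} The only nonroutine point is verifying that this last bound collapses to $w(X)^{<\delta(X,\B)}$, i.e. that $\delta(X,\B)\leq w(X)^{<\delta(X,\B)}$. I would argue by cases according to whether $\delta(X,\B)$ equals $\aleph_0$, is a successor cardinal, or is a limit cardinal, using $w(X)\geq\aleph_0$ and Cantor's inequality $2^\lambda>\lambda$: in each case $\sup_{\lambda<\delta(X,\B)}w(X)^\lambda$ already dominates $\delta(X,\B)$. Together with the trivial $\aleph_0\leq w(X)^{<\delta(X,\B)}$, this absorbs both the factor $\delta(X,\B)$ and the ``$\aleph_0+$'' in the definition of $nw$, completing the estimate $nw(X,\B)\leq w(X)^{<\delta(X,\B)}$ and hence the proposition.
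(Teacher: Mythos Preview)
Your proof is correct and follows essentially the same route as the paper's: for the lower bound you use the same injection $A\mapsto C_A$ from $\Po(D)$ into any $\B$-network via the open set $X\setminus(D\setminus A)$, and for the upper bound you build the same network of small unions from a base of size $w(X)$. The only difference is that you are more explicit about the cardinal arithmetic in the upper bound---the paper simply asserts that the resulting family has size at most $w(X)^{<\delta(X,\B)}$, whereas you carefully absorb the extra factor $\delta(X,\B)$ coming from $\bigl|[\mathcal{O}]^{<\delta(X,\B)}\bigr|$ via the case split on $\delta(X,\B)$; this is a genuine (if routine) point that the paper glosses over.
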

\begin{proof}
	
	For the first inequality take an arbitrary $\m<\delta_D(X,\B)$ and an arbitrary $\gamma$ a $\B$-network in $X$. There is a closed discrete $D\in\B$ with $|D|=\m$. For every $A\subset D$ there is $C_A\in\gamma$ such that $A\subset C_A\subset X\setminus(D\setminus A)$. Note that for every $A,B\subset D$ hold $C_A\not=C_B$ whenever $A\not=B$, hence $|\gamma|\geq 2^\m$.
	
	For the second inequality take a base $\U$ on $X$ with $|\U|=w(X)$. Let $B\in\B$ and $U\in\tau$ such that $\cl{B}\subset U$. For every $x\in\cl{B}$ there is $U_x\in\U$ with $x\in U_x\subset U$. The system $\{U_x;x\in\cl{B}\}$ is an open cover of $\cl{B}$ so it has an open subcover with cardinality less than $\delta(X,\B)$. So the system $\{\cup\U_0;\U_0\subset\U,|\U_0|<\delta(X,\B)\}$ is the $\B$-network on $X$.
\end{proof}

\section{Cardinal Invariants on $C(X)$}

	Let $(X,\rho)$ be a metric space and let $\B$ be a bornology with a closed base on $X$. Let $C(X)$ be the space of continuous real-valued functions on $X$. Let $\tau_\B$ and $\tau^s_\B$ be defined as in the section \ref{introduction}.

	For every $B\in\B$ and $\epsilon>0$ holds $[B,\epsilon]^s\subset[B,\epsilon]$, so $\tau_\B\subset\tau^s_\B$. Note that $\tau_\Po=\tau^s_\Po$ is the topology of uniform convergence, but since the notation suggests the topology of pointwise convergence we will use $\tau_U$ instead of $\tau_\Po$. Cardinal invariants on $C(X)$ will be denoted by $\varphi(\tau)$, instead of $\varphi(C(X),\tau)$, or $\varphi(f,\tau)$ for point specific functions, where $f\in C(X)$ and $\tau$ is a topology on $C(X)$.

	We will use the standard notation for usual cardinal invariants, so
\newline
$\chi,\pi\chi,\psi,\Delta,t,w,nw,u,c,d,hd,hL$, denote \emph{character,$\pi-$character, pseudo character, diagonal degree, tightness, weight, network weight, uniform weight, cellularity, density, hereditary density and hereditary Lindel\"of degree}, respectively, see \cite{engelking}, \cite{juhasz}.

	Observe that $(C(X),\tau_\B)$, $(C(X),\tau^s_\B)$ are topological groups. For every topological group $G$ and for every $g\in G$ holds $\chi(g,G)=\chi(G)=\pi\chi(G)=\pi\chi(g,G)$ and $\psi(g,G)=\psi(G)=\Delta(G)$.
\begin{lemma}\label{char}
	If for some $B_1,B_2\in\B;\ \epsilon_1,\epsilon_2>0;\ f_1,f_2\in C(X)$ holds $[B_1,\epsilon_1]^s(f_1)\subset [B_2,\epsilon_2](f_2)$, then $B_2\subset\cl{B_1}$.
\end{lemma}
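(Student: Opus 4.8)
The claim is a statement about the ball-inclusion order. I would argue by contrapositive: assume there is a point $x_0 \in B_2 \setminus \cl{B_1}$ and produce a function $g \in [B_1,\epsilon_1]^s(f_1)$ that fails to lie in $[B_2,\epsilon_2](f_2)$, namely by making $|g(x_0) - f_2(x_0)| \geq \epsilon_2$. The point is that membership of $g$ in the strong ball $[B_1,\epsilon_1]^s(f_1)$ only constrains $g$ on some open enlargement $B_1^\delta$ (for a $\delta$ we get to choose), so if $x_0$ is bounded away from $\cl{B_1}$ we have room to perturb $g$ near $x_0$ without disturbing it on a small enough enlargement of $B_1$.

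Here are the steps. First, since $x_0 \notin \cl{B_1}$ and $X$ is metric, there is $r > 0$ with $\rho(x_0, \cl{B_1}) > 2r$, so the open ball $U = \{x : \rho(x,x_0) < r\}$ is disjoint from $B_1^{r} \supset \cl{B_1}$; in particular $U \cap B_1 = \emptyset$. Second, build a continuous "bump" function $\lambda : X \to [0,1]$ with $\lambda(x_0) = 1$ and $\lambda \equiv 0$ off $U$ — for instance $\lambda(x) = \max\{0, 1 - \rho(x,x_0)/r\}$. Third, set $g = f_1 + (\epsilon_2 + |f_1(x_0) - f_2(x_0)|)\cdot \lambda$, or more carefully choose the coefficient so that $g(x_0)$ is pushed to distance $\geq \epsilon_2$ from $f_2(x_0)$ while $g = f_1$ on all of $X \setminus U$, hence on $B_1^{r}$. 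Then $(f_1, g) \in [B_1, \epsilon_1]^s$ witnessed by $\delta = r$ (indeed $g(x) = f_1(x)$ for all $x \in B_1^r$, so the sup of $|f_1 - g|$ there is $0 < \epsilon_1$), i.e. $g \in [B_1,\epsilon_1]^s(f_1)$. But $x_0 \in B_2$ and $|g(x_0) - f_2(x_0)| \geq \epsilon_2$, so $g \notin [B_2,\epsilon_2](f_2)$, contradicting the hypothesized inclusion. This forces $B_2 \subset \cl{B_1}$.

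The only mildly delicate point is getting the coefficient of the bump right so that $|g(x_0) - f_2(x_0)| \geq \epsilon_2$ regardless of the relative position of $f_1(x_0)$ and $f_2(x_0)$: picking the coefficient to be $\epsilon_2 + |f_1(x_0) - f_2(x_0)|$ and noting $g(x_0) - f_2(x_0) = (f_1(x_0) - f_2(x_0)) + (\epsilon_2 + |f_1(x_0) - f_2(x_0)|)$ handles it, since the two summands after the first have the same sign pattern needed to guarantee the absolute value is at least $\epsilon_2$ (one can also just take $g(x_0)$ far above $f_2(x_0)$). Everything else — continuity of $g$, the fact that $B_1^r$ avoids $U$, the verification of the strong-ball membership — is routine. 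I expect essentially no real obstacle; this is a "room to wiggle" argument exploiting that the strong topology's enlargement parameter is existentially quantified.
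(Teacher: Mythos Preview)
Your argument is correct and follows essentially the same route as the paper's proof: assume $x_0\in B_2\setminus\cl{B_1}$, find $\delta>0$ with $x_0\notin\cl{B_1^\delta}$, and manufacture $g\in C(X)$ that agrees with $f_1$ on $B_1^\delta$ but is pushed far from $f_2(x_0)$ at $x_0$. The only difference is cosmetic---the paper invokes the existence of such a $g$ directly (via normality/Tietze), while you write down an explicit bump $f_1+c\lambda$.
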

\begin{proof}
	Suppose there is $x_0\in B_2\setminus\cl{B_1}$, then there is $\delta>0$ such that $x_0\not\in\cl{B_1^\delta}$. There is a function $g\in C(X)$ such that for $x\in\cl{B_1^\delta}$ is $g(x)=f_1(x)$ and $g(x_0)=f_2(x_0)+2\epsilon_2$ and so $g\in[B_1,\epsilon_1]^s(f_1)\setminus [B_2,\epsilon_2](f_2)$.
\end{proof}
\bigskip

The following theorem generalizes \cite[Theorem 7.1]{beerlevi1} for $(C(X),\tau_\B^s)$.
\begin{theorem}\label{character}
	$\pi\chi(\tau_\B)=\pi\chi(\tau^s_\B)=\chi(\tau_\B)=\chi(\tau^s_\B)=u(\tau_\B)=u(\tau^s_\B)=cf(\B)$.
\end{theorem}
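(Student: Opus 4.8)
The plan is to prove a chain of inequalities establishing that all seven cardinal invariants equal $cf(\B)$. Since $(C(X),\tau_\B)$ and $(C(X),\tau^s_\B)$ are topological groups, we automatically have $\pi\chi(\tau_\B)=\chi(\tau_\B)$ and $\pi\chi(\tau^s_\B)=\chi(\tau^s_\B)$, so those equalities come for free. For the uniform weight, recall $u(\tau)$ is the least cardinality of a base of entourages for the defining uniformity; since character of a topological group is realized by a neighbourhood base at the identity, which for these group topologies corresponds exactly to a base of entourages, we get $\chi(\tau_\B)=u(\tau_\B)$ and $\chi(\tau^s_\B)=u(\tau^s_\B)$ as well (this is essentially the standard fact that the weight of a uniformity equals the character of the induced topology for metrizable-type or group situations; here one checks it directly from the symmetric translation-invariant entourage structure). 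So the real content reduces to showing $\chi(\tau_\B)=\chi(\tau^s_\B)=cf(\B)$.

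For the upper bound $\chi(\tau^s_\B)\le cf(\B)$: fix a base $\B_0$ of $\B$ with $|\B_0|=cf(\B)$ (taking $\B_0$ to consist of closed sets, using $(b4)$). I claim $\{[B,1/n]^s : B\in\B_0,\ n\in\omega\}$ is a neighbourhood base at any fixed $f\in C(X)$ for $\tau^s_\B$. Given a basic entourage $[B',\epsilon]^s$ with $B'\in\B$, choose $B\in\B_0$ with $B'\subset B$ and $n\in\omega$ with $1/n<\epsilon$; then $[B,1/n]^s\subset[B',\epsilon]^s$ because any $\delta$-enlargement witnessing closeness on $B^\delta$ witnesses it on $(B')^\delta\subset B^\delta$. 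This gives a base of size $\le\aleph_0\cdot cf(\B)=cf(\B)$, so $\chi(\tau^s_\B)\le cf(\B)$. Since $\tau_\B\subset\tau^s_\B$ and both are group topologies on the same underlying group, I should be slightly careful: the same argument with $[B,1/n]$ in place of $[B,1/n]^s$ gives $\chi(\tau_\B)\le cf(\B)$ directly.

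For the lower bound $cf(\B)\le\chi(\tau_\B)$ (and hence, since $\tau_\B\subset\tau^s_\B$ forces $\chi(\tau_\B)\le\chi(\tau^s_\B)$... actually no, a coarser topology can have larger character, so I must argue the lower bound separately for each, but Lemma~\ref{char} is tailored precisely to handle both at once): suppose $\mathcal{N}$ is a neighbourhood base at some $f\in C(X)$ for $\tau^s_\B$ — the harder case, since $\tau_\B$-neighbourhoods are also $\tau^s_\B$-neighbourhoods after shrinking, so it suffices to do it for $\tau_\B$ where each neighbourhood contains some $[B,\epsilon](f)$, and dually for $\tau^s_\B$ each contains some $[B,\epsilon]^s(f)$. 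Extract from $\mathcal{N}$ for each basic entourage $[B,\epsilon]^s(f)$ (with $B$ ranging over $\B$) some $N\in\mathcal{N}$ with $N\subset[B,\epsilon]^s(f)$; since $N$ is itself a $\tau^s_\B$-neighbourhood it contains some $[B_N,\epsilon_N]^s(f)$, so $[B_N,\epsilon_N]^s(f)\subset[B,\epsilon]^s(f)$... hmm, I want the containment the other way for Lemma~\ref{char}. Let me restate: the key is that for every $B\in\B$ there is some $N\in\mathcal N$ with $N\subset[B,1]^s(f)$, and then by the neighbourhood-base property there is $N'\in\mathcal N$ (or a basic set) inside $N$; applying Lemma~\ref{char} to $[B_{N'},\epsilon_{N'}]^s(f)\subset N\subset[B,1]^s(f)$... the cleanest route: for each $B\in\B$ pick $N_B\in\mathcal N$ with $N_B\subset [B,1](f)$, then pick a basic neighbourhood $[C_B,\delta_B]^s(f)\subset N_B$, so $[C_B,\delta_B]^s(f)\subset[B,1](f)$, and Lemma~\ref{char} yields $B\subset\cl{C_B}$; thus $\{\cl{C_B}:N\in\mathcal N\}$ (really $\{\cl{C_B}:B\in\B\}$, but $C_B$ depends only on $N_B$) is cofinal in $\B$ among closed sets, hence a base of $\cl\B=\B$, so $cf(\B)\le|\{C_N:N\in\mathcal N\}|\le|\mathcal N|$, giving $cf(\B)\le\chi(\tau^s_\B)$. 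The same argument verbatim with $[\,\cdot\,,\,\cdot\,]$ throughout (Lemma~\ref{char} still applies since $[C,\delta]\supset[C,\delta]^s$... wait, I need $[C,\delta]^s(f)\subset[B,1](f)$, which follows from $[C,\delta](f)\subset[B,1](f)$ since $[C,\delta]^s(f)\subset[C,\delta](f)$) gives $cf(\B)\le\chi(\tau_\B)$.

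The main obstacle is the lower-bound direction, specifically getting Lemma~\ref{char} to apply in both topologies simultaneously and making sure the sets $C_N$ extracted from the neighbourhood base genuinely form a cofinal family in $\B$: one must use that $\mathcal N$ is a neighbourhood base (not merely that it consists of neighbourhoods), that $\B$ has a closed base so $\cl\B=\B$, and that Lemma~\ref{char}'s hypothesis $[B_1,\epsilon_1]^s(f_1)\subset[B_2,\epsilon_2](f_2)$ is exactly of the form produced when a basic $\tau^s_\B$-neighbourhood sits inside a basic $\tau_\B$- or $\tau^s_\B$-neighbourhood. Everything else is a routine shrinking argument, and the topological-group facts (stated in the excerpt) dispose of $\pi\chi$; the identification of $u$ with $\chi$ for these group topologies is standard but I would spell out the one-line check that a countable-index-free base of symmetric invariant entourages of size $\chi$ can be built from a neighbourhood base at $f$ of that size.
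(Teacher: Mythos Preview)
Your proposal is correct and follows essentially the same route as the paper: the upper bound $u(\tau_\B),u(\tau^s_\B)\le cf(\B)$ from the defining entourages, and the lower bound via Lemma~\ref{char} by extracting from a (local) base at a fixed function a cofinal family of closures $\cl{C_N}$ in $\B$. The one organisational difference is that the paper proves the lower bound for $\pi\chi$ directly (using that a $\pi$-base element sits inside $[B,1](0)$, then applying Lemma~\ref{char} with possibly $f_U\ne 0$), obtaining the clean chain $cf(\B)\le\pi\chi\le\chi\le u\le cf(\B)$, whereas you prove the lower bound for $\chi$ and invoke the group identity $\pi\chi=\chi$ separately; this is a cosmetic distinction. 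Your exposition would benefit from indexing the extracted sets by $N\in\mathcal N$ rather than by $B\in\B$ from the outset (fix once and for all, for each $N$, a basic $[C_N,\delta_N]^s(f)\subset N$), which removes the parenthetical patch about ``$C_B$ depends only on $N_B$''.
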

\begin{proof}
	
 The inequality $\pi\chi(\tau)\leq\chi(\tau)\leq u(\tau)$ holds generally for any completely regular space, inequalities $u(\tau_\B)\leq cf(\B)$ and $u(\tau^s_\B)\leq cf(\B)$ follow directly from the definition of the respective topologies.

	To see that $cf(\B)\leq\pi\chi(0,\tau_\B)$ take a $\pi-$base $\U$ at $0$ with cardinality $\pi\chi(0,\tau_\B)$. For every $U\in\U$ take $B_U\in\B$, $\epsilon_U>0$ and $f_U\in C(X)$ such that $[B_U,\epsilon_U](f_U)\subset U$. Since $\U$ is a $\pi-$base at $0$, for every $B\in B$ there is $U\in\U$ such that $U\subset [B,1](0)$. Then $[B_U,\epsilon_U](f_U)\subset [B,1](0)$, so by Lemma \ref{char} holds  $B\subset\cl{B_U}$ and hence the system $\{\cl{B_U};U\in\U\}$ is a base of $\B$. Similarly for $cf(\B)\leq\pi\chi(0,\tau^s_\B)$.
\end{proof}

\begin{theorem}\label{pseudo}
	$\psi(\tau_\B)=\psi(\tau^s_\B)=\Delta(\tau_\B)=\Delta(\tau^s_\B)=d(X,\B)$.
\end{theorem}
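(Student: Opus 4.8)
The plan is to lean on the topological-group identities recorded just before Lemma~\ref{char}: for a topological group $G$ and any $g\in G$ one has $\psi(g,G)=\psi(G)=\Delta(G)$. Applying this to the groups $(C(X),\tau_\B)$ and $(C(X),\tau^s_\B)$ at the point $0$ reduces the whole statement to computing the pseudocharacter at $0$ in each topology. Moreover, since $\tau_\B\subset\tau^s_\B$, the finer topology has the smaller pseudocharacter, so $\psi(\tau^s_\B)\le\psi(\tau_\B)$. Hence it suffices to prove the two inequalities $\psi(0,\tau_\B)\le d(X,\B)$ and $d(X,\B)\le\psi(0,\tau^s_\B)$, which then close the circle $d(X,\B)\le\psi(0,\tau^s_\B)\le\psi(0,\tau_\B)\le d(X,\B)$ and yield equality of all six quantities.

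For $\psi(0,\tau_\B)\le d(X,\B)$ I would fix $\gamma\subset\B$ with $\cl{\bigcup\gamma}=X$ and $|\gamma|\le d(X,\B)$, and consider the family $\{[B,1/n](0);\ B\in\gamma,\ n\in\omega\}$ of basic $\tau_\B$-neighbourhoods of $0$. Any $f$ lying in all of these satisfies $|f(x)|<1/n$ for every $x\in\bigcup\gamma$ and every $n$, hence $f$ vanishes on the dense set $\bigcup\gamma$ and, being continuous, is identically $0$. So the intersection of this family is $\{0\}$, and its cardinality is at most $d(X,\B)\cdot\aleph_0=d(X,\B)$.

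For the reverse inequality $d(X,\B)\le\psi(0,\tau^s_\B)$ I would start from a family $\{U_i;\ i\in I\}$ of $\tau^s_\B$-open sets with $\bigcap_{i\in I}U_i=\{0\}$ and $|I|=\psi(0,\tau^s_\B)$, shrink each $U_i$ to a basic strong entourage section $[B_i,\epsilon_i]^s(0)\subset U_i$ (so still $\bigcap_i[B_i,\epsilon_i]^s(0)=\{0\}$), and set $\gamma=\{B_i;\ i\in I\}\subset\B$. It remains to verify $\cl{\bigcup\gamma}=X$. If not, pick $x_0\notin\cl{\bigcup\gamma}$; since $(X,\rho)$ is metric, $r:=\rho(x_0,\bigcup\gamma)>0$, and the function $f(x)=\max\{0,\,1-\tfrac2r\rho(x,x_0)\}$ is continuous, equals $1$ at $x_0$, and vanishes on each enlargement $B_i^{r/2}$, because a point within $r/2$ of some $B_i\subset\bigcup\gamma$ lies at distance $>r/2$ from $x_0$. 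Thus $f\in[B_i,\epsilon_i]^s(0)$ for every $i$ (with $\delta=r/2$) while $f\ne0$, contradicting $\bigcap_i[B_i,\epsilon_i]^s(0)=\{0\}$. Hence $\gamma$ witnesses $d(X,\B)\le|I|=\psi(0,\tau^s_\B)$.

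The routine points — that the sets written above are genuinely open, that the constants $\aleph_0$ built into $\psi$ and $d(X,\B)$ absorb the countable factors, and the exact algebra of the Urysohn-type function — are straightforward. The one genuinely load-bearing step is the construction in the last paragraph showing that the "bases" $B_i$ extracted from a pseudobase at $0$ must have dense union; this is precisely where the strong topology (the use of enlargements $B_i^\delta$) and the metrizability of $X$ enter in an essential way, and I expect it to be the main obstacle.
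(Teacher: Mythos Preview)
Your proposal is correct and follows essentially the same route as the paper: the upper bound comes from intersecting the sets $[B,1/n](0)$ over a $\B$-family with dense union, and the lower bound from constructing a nonzero Urysohn-type function supported away from $\bigcup_i B_i^{r/2}$. The only cosmetic difference is that the paper bounds $\Delta(\tau_\B)$ and $\Delta(\tau^s_\B)$ directly by showing the corresponding entourages intersect to the diagonal, whereas you bound $\psi(0,\tau_\B)$ and then invoke the group identity $\psi=\Delta$; these are the same computation up to translation by the group structure.
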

\begin{proof}
	We have that $\psi(\tau)\leq\Delta(\tau)$ for any topological space. Since $\tau_\B\subset\tau^s_\B$ then $\psi(\tau^s_\B)\leq\psi(\tau_\B)$.

	Now take $\B_0\subset\B$ such that $\cl{\bigcup\B_0}=X$ and $|B_0|=d(X,\B)$. We want to prove that $A:=\bigcap\{[B,1/n];B\in\B_0,\,n\in\omega\}\subset\Delta_{C(X)}$, where $\Delta_{C(X)}=\{(f,f);f\in C(X)\}$ is the diagonal of the space $C(X)$. Suppose $(g,h)\in A$ and $x\in B\in\B_0$. Then for every $n\in\omega$ holds $|f(x)-g(x)|<1/n$ so $f(x)=g(x)$ and since $\cl{\bigcup\B_0}=X$ and $g,h\in C(X)$, then $g=h$. Now observe that $\Delta_{C(X)}\subset int([B,1/n])\subset[B,1/n]$ (interior with respect to $\tau_\B\times\tau_\B$) and also $\Delta_{C(X)}\subset int([B,1/n]^s)\subset[B,1/n]^s\subset[B,1/n]$ (interior with respect to $\tau^s_\B\times\tau^s_\B$) and hence $\Delta(\tau_\B)\leq d(X,\B)$ and $\Delta(\tau^s_\B)\leq d(X,\B)$.

	To prove that $d(X,\B)\leq\psi(0,\tau^s_\B)$ take a system $\U$ of open sets such that $\bigcap\U=\{0\}$. For every $U\in\U$ take $B_U\in\B$ and $\epsilon_U>0$ such that $[B_U,\epsilon_U]^s(0)\subset U$. Put $\B_0=\{B_U;U\in\U\}$, we will show that $\cl{\bigcup\B_0}=X$ and we are done. Suppose there is $x_0\in X\setminus\cl{\bigcup\B_0}$, then there is $\delta>0$ such that $x_0\not\in\cl{(\bigcup\B_0)^\delta}$ and there is a function $g\in C(X)$ such that for $x\in\cl{(\bigcup\B_0)^\delta}$ is $g(x)=0$ and $g(x_0)\not=0$ and therefore $g\in\bigcap\U$ contrary to supposition.
\end{proof}
\begin{proposition}\label{zero}
	Let $\F\subset C(X)$. Then $0\in\cl{\F}$ if and only if for every $\epsilon>0$ the system $\{f^{-1}(-\epsilon,\epsilon);f\in\F\}$ is an open $\B$-cover (a strong open $\B$-cover) of $X$, where $\cl{\F}$ is the closure with respect to $\tau_\B$ ($\tau^s_\B$).
\end{proposition}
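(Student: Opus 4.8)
The plan is to unfold the definition of topological closure at the point $0$ using the explicit neighbourhood bases of $\tau_\B$ and $\tau^s_\B$ recalled in Section~\ref{introduction}, and then to observe that the condition obtained is just a harmless rearrangement of the quantifiers appearing in the definition of an (open, resp.\ strong open) $\B$-cover. Since $0\in\cl\F$ means precisely that every basic neighbourhood of $0$ meets $\F$, the argument is a pure translation between the neighbourhood filter of $0$ and the covering language; the only points needing care are recording that each set $f^{-1}(-\epsilon,\epsilon)$ is open (continuity of $f$) and that the two universal quantifiers ``for every $B\in\B$'' and ``for every $\epsilon>0$'' may be interchanged, which is exactly what matches the single statement ``$0\in\cl\F$'' with the ``for every $\epsilon>0$'' phrasing in the conclusion.

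First I would treat $\tau_\B$. A neighbourhood base at $0$ consists of the translates $[B,\epsilon](0)=\{g\in C(X);\ |g(x)|<\epsilon\text{ for every }x\in B\}$ with $B\in\B$ and $\epsilon>0$, and for $g\in C(X)$ the membership $g\in[B,\epsilon](0)$ is plainly equivalent to $B\subset g^{-1}(-\epsilon,\epsilon)$. Hence $0\in\cl\F$ (closure with respect to $\tau_\B$) if and only if for every $B\in\B$ and every $\epsilon>0$ there is $f\in\F$ with $B\subset f^{-1}(-\epsilon,\epsilon)$. As each $f^{-1}(-\epsilon,\epsilon)$ is open by continuity of $f$, fixing $\epsilon>0$ shows that this condition says exactly that $\{f^{-1}(-\epsilon,\epsilon);\ f\in\F\}$ is an open $\B$-cover of $X$, and since the quantifiers over $B$ and over $\epsilon$ commute this settles the $\tau_\B$-case.

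The $\tau^s_\B$-case is entirely analogous. A neighbourhood base at $0$ is now given by $[B,\epsilon]^s(0)=\{g\in C(X);\ \exists\,\delta>0\text{ such that }|g(x)|<\epsilon\text{ for every }x\in B^\delta\}$, and $g\in[B,\epsilon]^s(0)$ is equivalent to the existence of $\delta>0$ with $B^\delta\subset g^{-1}(-\epsilon,\epsilon)$. Therefore $0\in\cl\F$ (closure with respect to $\tau^s_\B$) if and only if for every $B\in\B$ and every $\epsilon>0$ there exist $f\in\F$ and $\delta>0$ with $B^\delta\subset f^{-1}(-\epsilon,\epsilon)$; rearranging the quantifiers over $B$ and $\epsilon$, and again using that the sets $f^{-1}(-\epsilon,\epsilon)$ are open, this is precisely the statement that for every $\epsilon>0$ the family $\{f^{-1}(-\epsilon,\epsilon);\ f\in\F\}$ is a strong open $\B$-cover of $X$. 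There is no real obstacle in this proposition beyond keeping the quantifier bookkeeping straight.
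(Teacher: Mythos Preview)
Your proof is correct and follows essentially the same approach as the paper: unfold the neighbourhood base at $0$, observe that $f\in[B,\epsilon](0)$ (respectively $[B,\epsilon]^s(0)$) is equivalent to $B\subset f^{-1}(-\epsilon,\epsilon)$ (respectively $B^\delta\subset f^{-1}(-\epsilon,\epsilon)$ for some $\delta>0$), and reinterpret this as the covering condition. The paper's version is terser---it treats only the $\tau_\B$ case and leaves the $\tau^s_\B$ case as ``very similar''---while you spell out both cases and make the quantifier interchange and the openness of $f^{-1}(-\epsilon,\epsilon)$ explicit, but there is no substantive difference in the argument.
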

\begin{proof}
	Proofs for $\tau_\B$ and $\tau^s_\B$ are very similar, so we will show it only for $\tau_\B$. The statement $0\in\cl{\F}$ is equivalent to the statement that for every $\epsilon>0$ and for every $B\in\B$ there is $f\in\F\cap[B,\epsilon](0)$. Hence $B\subset f^{-1}(-\epsilon,\epsilon)$ and the rest follows.
\end{proof}

\begin{theorem}\label{tightness}
	$t(\tau_\B)=L(X,\B)$ and $t(\tau^s_\B)=L^s(X,\B)$.
\end{theorem}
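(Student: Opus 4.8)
The plan is to reduce everything to tightness at the zero function. Since $(C(X),\tau_\B)$ and $(C(X),\tau^s_\B)$ are topological groups, translations are homeomorphisms, so $t(\tau_\B)=t(0,\tau_\B)$ and $t(\tau^s_\B)=t(0,\tau^s_\B)$. The engine of the proof is Proposition \ref{zero}, which turns ``$0\in\cl\F$'' into the statement that $\{f^{-1}(-\epsilon,\epsilon):f\in\F\}$ is an open (respectively, strong open) $\B$-cover for every $\epsilon>0$. I will carry out the argument for $t(\tau_\B)=L(X,\B)$; the equality $t(\tau^s_\B)=L^s(X,\B)$ then follows by replacing ``open $\B$-cover'' with ``strong open $\B$-cover'' throughout, using the corresponding halves of Proposition \ref{zero} and of the relevant definitions, together with the elementary metric fact $\cl{B^{\delta/2}}\subset B^\delta$.

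For $t(\tau_\B)\le L(X,\B)$ I would start from an arbitrary $\F\subset C(X)$ with $0\in\cl\F$. For each $n\in\omega$, Proposition \ref{zero} makes $\{f^{-1}(-1/n,1/n):f\in\F\}$ an open $\B$-cover, hence it has an open $\B$-subcover indexed by some $\F_n\subset\F$ with $|\F_n|\le L(X,\B)$. Putting $\F_0=\bigcup_{n\in\omega}\F_n$ gives $|\F_0|\le L(X,\B)$ (as $L(X,\B)\ge\aleph_0$), and for every $\epsilon>0$, picking $n$ with $1/n<\epsilon$ and using $f^{-1}(-1/n,1/n)\subset f^{-1}(-\epsilon,\epsilon)$ shows $\{f^{-1}(-\epsilon,\epsilon):f\in\F_0\}$ is an open $\B$-cover; thus $0\in\cl{\F_0}$ by Proposition \ref{zero}, and so $t(0,\tau_\B)\le L(X,\B)$.

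For the reverse inequality I would fix a cardinal $\m$ with $\aleph_0\le\m<L(X,\B)$ and construct $\F\subset C(X)$ witnessing $t(0,\tau_\B)>\m$. By definition of $L(X,\B)$ there is an open $\B$-cover $\U$ admitting no open $\B$-subcover of cardinality $\le\m$. For each closed $B\in\B$ choose $U_B\in\U$ with $B\subset U_B$ (possible since $\U$ is a $\B$-cover and $\B$ has a closed base) and, using normality of the metric space, a continuous $f_B\colon X\to[0,1]$ with $f_B\equiv0$ on $B$ and $f_B\equiv1$ on $X\setminus U_B$; note $f_B^{-1}(-1,1)\subset U_B$. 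Let $\F=\{f_B:B\in\B\text{ closed}\}$. For every $\epsilon>0$ and $B'\in\B$ the function $f_{\cl{B'}}$ vanishes on $\cl{B'}\supset B'$, so $\{f^{-1}(-\epsilon,\epsilon):f\in\F\}$ is an open $\B$-cover, and hence $0\in\cl\F$. Conversely, take any $\F_0\subset\F$ with $|\F_0|\le\m$ and choose for each $f\in\F_0$ a closed $B(f)\in\B$ with $f=f_{B(f)}$; then $\{U_{B(f)}:f\in\F_0\}$ is a subfamily of $\U$ of cardinality $\le\m$, hence not an open $\B$-cover, so there is $B^*\in\B$ not contained in any $U_{B(f)}$, and therefore, by $f^{-1}(-1,1)\subset U_{B(f)}$, not contained in any $f^{-1}(-1,1)$. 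Thus $\{f^{-1}(-1,1):f\in\F_0\}$ is not an open $\B$-cover, so $0\notin\cl{\F_0}$ by Proposition \ref{zero}, and $t(0,\tau_\B)>\m$.

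The routine half is the upper bound. The part that needs care is the construction in the second half, where the inclusion $f_B^{-1}(-1,1)\subset U_B$ does the real work: it forces a cheap subfamily of $\F$ to produce a cheap subfamily of $\U$, which is precisely what transfers the failure of the $\B$-cover property from $\U$ back to $\F$. For $\tau^s_\B$ the only extra subtlety is that $f_B$ must vanish on $\cl{B^{\delta_B/2}}$, where $\delta_B>0$ is chosen with $B^{\delta_B}\subset U_B$; the chain $\cl{B^{\delta_B/2}}\subset B^{\delta_B}\subset U_B$ keeps $f_B$ well defined, and the remaining steps go through with the strong $\B$-cover condition in place of the $\B$-cover condition.
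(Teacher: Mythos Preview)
Your proof is correct and follows essentially the same route as the paper: reduce to tightness at $0$, use Proposition~\ref{zero} to translate between closures and (strong) open $\B$-covers, and build Urysohn-type functions $f_B$ with $f_B^{-1}(-\epsilon,\epsilon)\subset U_B$ to pass from a $\B$-cover of $X$ to a family in $C(X)$ whose closure contains $0$. The only cosmetic difference is that the paper proves $L(X,\B)\le t(0,\tau_\B)$ directly (start from an arbitrary $\B$-cover $\U$, apply tightness to the family $\{f_B\}$ to extract a small $\B_0$, and read off a small $\B$-subcover from $\{U_B:B\in\B_0\}$), whereas you argue by fixing $\m<L(X,\B)$ and exhibiting a witness family with no subfamily of size $\le\m$ accumulating at $0$; the construction and the decisive inclusion $f_B^{-1}(-1,1)\subset U_B$ are identical in both.
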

\begin{proof}
	Since the proofs for $\tau_\B$ and $\tau^s_\B$ are similar we will prove it only for $\tau^s_\B$ and it suffices to prove that $t(0,\tau^s_\B)=L^s(X,\B)$.

	For $L^s(X,\B)\leq t(0,\tau^s_\B)$ take a strong open $\B$-cover $\U$ of $X$. For every $B\in\B$ there are $\delta_B>0$ and $U_B\in\U$ such that $B^{2\delta_B}\subset U_B$ and so we can take $f_B\in C(X)$ such that $f_B(\cl{B^{\delta_B}})=\{0\}$ and $f_B(X\setminus U_B)=\{1\}$. By Proposition \ref{zero} is $0\in\cl{\{f_B;B\in\B\}}$ and so there is $\B_0\subset\B$ such that $|\B_0|\leq t(0,\tau^s_\B)$ and $0\in\cl{\{f_B;B\in\B_0\}}$. Since $f_B^{-1}(-1/2,1/2)\subset U_B$,by Proposition \ref{zero} the system $\{U_B;B\in\B_0\}$ is a strong open $\B$-cover of $X$.

	To prove $t(0,\tau^s_\B)\leq L^s(X,\B)$ take any $\F\subset C(X)$ with $0\in\cl{\F}$. By Proposition \ref{zero}, for every $n\in\omega$ the system $\{f^{-1}(-1/n,1/n);f\in\F\}$ is a strong open $\B$-cover of $X$ and thus there is $\F_n\subset\F$ such that $|\F_n|\leq L^s(X,\B)$ and $\{f^{-1}(-1/n,1/n);f\in\F_n\}$ is a strong open $\B$-cover of $X$. Put $\F_0:=\bigcup_{n\in\omega}\F_n\subset\F$ and we have that $\F_0\leq L^s(X,\B)$ and $0\in\cl{\F_0}$.
\end{proof}
\begin{theorem}\label{nwcx}
	$nw(X,\B)\leq nw(\tau_\B)\leq w(X)^{<\delta(X,\B)}$.
\end{theorem}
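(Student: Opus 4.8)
The plan is to establish the two inequalities by two separate explicit constructions, in the spirit of the classical network estimates for set-open function topologies.

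\textbf{First inequality} $nw(X,\B)\le nw(\tau_\B)$. I would take a network $\mathcal N$ of $(C(X),\tau_\B)$ with $|\mathcal N|=nw(\tau_\B)$ and, for each nonempty $N\in\mathcal N$, form the \emph{closed} subset $C_N=\bigcap_{g\in N}g^{-1}[-\tfrac12,\tfrac12]$ of $X$. The claim is that $\gamma=\{C_N:N\in\mathcal N\}$ is a $\B$-network; since $|\gamma|\le|\mathcal N|$ this suffices. To check it, fix $B\in\B$ and open $U\supseteq\cl{B}$, and use metrizability to pick $f\in C(X)$ with $0\le f\le 1$, $f(\cl{B})=\{0\}$ and $f(X\setminus U)=\{1\}$ (say $f=\rho(\cdot,\cl{B})/(\rho(\cdot,\cl{B})+\rho(\cdot,X\setminus U))$). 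Because $f$ vanishes on $\cl{B}$, $f$ is an interior point of the entourage section $[B,\tfrac14](0)$ (indeed $[B,\tfrac18](f)\subseteq[B,\tfrac14](0)$), so some $N\in\mathcal N$ satisfies $f\in N\subseteq[B,\tfrac14](0)$. Then every $g\in N$ has $|g(x)|<\tfrac14$ for $x\in B$, whence $B\subseteq C_N$ and, $C_N$ being closed, $\cl{B}\subseteq C_N$; while $f\in N$ gives $C_N\subseteq f^{-1}[-\tfrac12,\tfrac12]=f^{-1}[0,\tfrac12]\subseteq U$. Thus $\cl{B}\subseteq C_N\subseteq U$ with $C_N\in\gamma$.

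\textbf{Second inequality} $nw(\tau_\B)\le w(X)^{<\delta(X,\B)}$. Here I would build a network directly. Fix a base $\U$ of $X$ with $|\U|=w(X)$, a closed base $\B_0$ of $\B$, and let $\mathcal I$ be the countable family of open intervals with rational endpoints. For every set $I$ with $|I|<\delta(X,\B)$ and all families $(U_i)_{i\in I}$ in $\U$, $(J_i)_{i\in I}$ in $\mathcal I$, put
\[ N\big((U_i,J_i)_{i\in I}\big)=\{g\in C(X):g(U_i)\subseteq J_i\text{ for all }i\in I\}, \]
and let $\gamma$ be the family of all such sets. To see $\gamma$ is a network, use that $\{[B,\epsilon](f):B\in\B_0,\ \epsilon>0\}$ is a neighbourhood base at an arbitrary $f\in C(X)$. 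Given such $B$ and $\epsilon$, for each $x\in B$ choose $J_x\in\mathcal I$ with $f(x)\in J_x$ and $\operatorname{diam}J_x<\epsilon$, and then $U_x\in\U$ with $x\in U_x\subseteq f^{-1}(J_x)$. The sets $U_x$ cover $B$, and since $\delta(B)\le\delta(X,\B)$ the closed set $B$ is $\delta(X,\B)$-compact, so an $I$-indexed subfamily with $|I|<\delta(X,\B)$ already covers $B$. The corresponding $N$ contains $f$ and lies in $[B,\epsilon](f)$: for $g\in N$ and $x\in B$, choosing $x'\in I$ with $x\in U_{x'}$ gives $g(x),f(x)\in J_{x'}$, hence $|g(x)-f(x)|<\epsilon$. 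Finally, a member of $\gamma$ is determined by the subset $\{(U_i,J_i):i\in I\}$ of $\U\times\mathcal I$, of size $<\delta(X,\B)$; since $\delta(X,\B)\le\delta(X)\le L(X)^+\le w(X)^+$, a routine count yields $|\gamma|\le(w(X)\cdot\aleph_0)^{<\delta(X,\B)}=w(X)^{<\delta(X,\B)}$.

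I expect the delicate points to sit in the second inequality: one must genuinely justify that the neighbourhood base at $f$ may be taken over the closed base $\B_0$ (so that $\delta(\cl{B})\le\delta(X,\B)$ is applicable), carry out the subcover reduction via the compactness degree of $B$, and keep the final cardinal arithmetic honest. The first inequality is essentially routine once one hits on the closed set $C_N$; the only subtlety there is that $[B,\tfrac14](0)$ need not be $\tau_\B$-open, so one works with its interior.
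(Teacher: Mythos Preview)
Your proof is correct and follows essentially the same route as the paper. For the first inequality the paper also picks a separating function and traps $\cl{B}$ between a set built from a network element and $U$; the only cosmetic difference is that the paper uses the open set $F^{*}=\{x\in X:\forall g\in F,\ g(x)>0\}$ (with $f(\cl{B})=\{1\}$, $f(X\setminus U)=\{0\}$ and $F\subset[\cl{B},1](f)$) in place of your closed $C_N$, and for the second inequality both arguments build the same network of intersections of sets $\{g:g(U)\subset I\}$ indexed by fewer than $\delta(X,\B)$ pairs from a base of $X$ and a countable base of $\mathbb{R}$.
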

\begin{proof}
	For the first inequality let $\F$ be a network in $C(X)$ with $|\F|=nw(\tau_\B)$. For every $F\in\F$ put $F^*=\{x\in X;f(x)>0\text{ for every }f\in F\}$ and put $\F^*=\{F^*;F\in\F\}$. To show that $\F^*$ is a $\B-$network for $X$, let $B\in\B$ and $U$ be an open set such that $\cl{B}\subset U$. Take $f\in C(X)$ such that $f(\cl{B})=\{1\}$ and $f(X\setminus U)=\{0\}$ and $F\in\F$ such that $f\in F\subset [\cl{B},1](f)$. Now $\cl{B}\subset F^*\subset U$.
	
	For the second inequality let $\alpha$ be a countable base on $R$ and $\beta$ be a base on $X$ with $|\beta|=w(X)$. For every $I\in\alpha$ and $U\in\beta$ denote $\U_{I,U}=\{g\in C(X);g(U)\subset I\}$. Fix an arbitrary $f\in C(X)$ and $B\in\B$ and $\epsilon > 0$. For every $x \in X$ there is $I_x\in\alpha$ such that $f(x)\in I_x$, $I_x\subset(f(x)-\epsilon/2,f(x)+\epsilon/2)$ and there is $U_x\in\beta$ such that $x\in U_x\subset f^{-1}(I_x)$. Let $\V_x=\U_{I_x,U_x}$. Then $f\in\V_x$ and for every $g\in\V_x$ and for every $x'\in U_x$ holds $|f(x')-g(x')|<\epsilon$. Since $\{U_x;x\in X\}$ is an open cover of $X$ then $\cl{B}\subset\bigcup\{U_{x_k};k<\n\}$ for some $\n<\delta(X,\B)$ and therefore $f\in\bigcap\{\V_{x_k};k<\n\}\subset[B,\epsilon](f)$. Thus the system $\{\bigcap\{\U_{I_k,U_k};k<\n\};I_k\in\alpha,U_k\in\beta,\n<\delta(X,\B)\}$ is a network for $\tau_\B$ with cardinality $w(X)^{<\delta(X,\B)}$.
\end{proof}
From Proposition \ref{nwx}, Theorem \ref{nwcx} and the fact that $nw(X,\B)\geq nw(X,\F)=nw(X)=w(X)$ follows:
\begin{corollary}\label{nwcxcor}
	If $w(X)<2^{<\delta(X,\B)}$ then $nw(X,\B)=nw(\tau_\B)=2^{<\delta(X,\B)}$; and if $w(X)=2^\n\geq 2^{<\delta(X,\B)}$ or $\delta(X,\B)=\aleph_0$ then $nw(X,\B)=nw(\tau_\B)=w(X)$.
\end{corollary}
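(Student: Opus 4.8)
The plan is to trap both $nw(X,\B)$ and $nw(\tau_\B)$ between the cardinals $\max\{w(X),2^{<\delta(X,\B)}\}$ and $w(X)^{<\delta(X,\B)}$, and then to check by elementary cardinal arithmetic that under each of the stated hypotheses these two bounds coincide. For the lower estimate I would first apply Proposition \ref{delta}, which in a metric space gives $\delta_D(X,\B)=\delta(X,\B)$, so that the first inequality of Proposition \ref{nwx} becomes $2^{<\delta(X,\B)}\le nw(X,\B)$; together with $nw(X,\B)\ge nw(X,\F)=nw(X)=w(X)$ and with Theorem \ref{nwcx} this yields
\[\max\{w(X),2^{<\delta(X,\B)}\}\le nw(X,\B)\le nw(\tau_\B)\le w(X)^{<\delta(X,\B)}.\]
Everything then reduces to computing the two outer terms.

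For the first case, assume $w(X)<2^{<\delta(X,\B)}$. Since $2^{<\delta(X,\B)}$ is by definition the supremum of $2^\mu$ over cardinals $\mu<\delta(X,\B)$, there must be some $\mu_0<\delta(X,\B)$ with $w(X)\le 2^{\mu_0}$ (otherwise $w(X)$ would bound that supremum). Then for every cardinal $\m<\delta(X,\B)$ one has $w(X)^\m\le 2^{\mu_0\cdot\m}=2^{\max\{\mu_0,\m\}}\le 2^{<\delta(X,\B)}$, because $\max\{\mu_0,\m\}<\delta(X,\B)$. Passing to the supremum over $\m$ gives $w(X)^{<\delta(X,\B)}\le 2^{<\delta(X,\B)}$, and the reverse inequality is immediate from $w(X)\ge 2$. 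Hence the displayed chain collapses and $nw(X,\B)=nw(\tau_\B)=2^{<\delta(X,\B)}$.

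For the second case I would treat its two disjuncts in turn. If $w(X)=2^\n\ge 2^{<\delta(X,\B)}$, then for each cardinal $\m<\delta(X,\B)$ we have $w(X)^\m=2^{\n\cdot\m}=2^{\max\{\n,\m\}}=\max\{2^\n,2^\m\}=w(X)$, since $2^\m\le 2^{<\delta(X,\B)}\le w(X)$; taking suprema gives $w(X)^{<\delta(X,\B)}=w(X)$. If instead $\delta(X,\B)=\aleph_0$, then $w(X)^{<\aleph_0}=\sup_{n\in\omega}w(X)^n=w(X)$ because $w(X)$ is infinite, while $2^{<\aleph_0}=\aleph_0\le w(X)$. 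In both disjuncts the chain again collapses and $nw(X,\B)=nw(\tau_\B)=w(X)$.

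I do not expect a genuine obstacle here; the one place one could slip is in aligning the exponents — using Proposition \ref{delta} to replace $\delta_D(X,\B)$ by $\delta(X,\B)$ so that the lower and upper bounds really are stated with the same cardinal — and in the bookkeeping of cardinal arithmetic, i.e. recalling that $\m^{<\n}=\sup\{\m^\mu:\mu<\n\ \text{a cardinal}\}$, that $\max\{\mu_1,\mu_2\}<\n$ whenever $\mu_1,\mu_2<\n$, and that $2^{\kappa\cdot\lambda}=2^{\max\{\kappa,\lambda\}}$ for infinite $\kappa,\lambda$. One should also note that in the degenerate situation where $X$ is finite the $\aleph_0+$ convention forces $w(X)=\aleph_0$ and $\delta(X,\B)=\aleph_0$, so the statement still holds via the second case.
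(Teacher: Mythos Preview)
Your proof is correct and follows exactly the approach the paper intends: the corollary is stated there as an immediate consequence of Proposition~\ref{nwx}, Theorem~\ref{nwcx}, and the observation $nw(X,\B)\ge nw(X,\F)=nw(X)=w(X)$, and you have supplied precisely these ingredients together with the routine cardinal arithmetic (including the use of Proposition~\ref{delta} to pass from $\delta_D$ to $\delta$) needed to collapse the chain of inequalities in each case.
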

\begin{theorem}\label{network weight}
	$nw(\tau_\B)=nw(X,\B)d(\tau_B)$ and $nw(\tau_\B^s)=nw(X,\B)d(\tau_B^s)$.
\end{theorem}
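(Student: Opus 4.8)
The plan is to prove the two identities by the same argument, so I will focus on $nw(\tau_\B)=nw(X,\B)\,d(\tau_\B)$. The inequality $nw(X,\B)\le nw(\tau_\B)$ is already established in Theorem \ref{nwcx}, and $d(\tau_\B)\le nw(\tau_\B)$ holds in every topological space, so one direction, $nw(X,\B)\,d(\tau_\B)\le nw(\tau_\B)$, is immediate. The real work is the reverse inequality $nw(\tau_\B)\le nw(X,\B)\,d(\tau_\B)$. Here I would fix a $\B$-network $\gamma$ on $X$ with $|\gamma|=nw(X,\B)$ and a $\tau_\B$-dense set $D\subset C(X)$ with $|D|=d(\tau_\B)$, together with a countable base $\alpha$ for the usual topology on $\mathbb R$.

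The idea is to build a network for $\tau_\B$ out of sets of the form
\[
W(h,C_1,I_1,\dots,C_k,I_k)=\{g\in C(X);\ g\in [h,\,\tfrac1m](h)\text{ and } g(C_j)\subset I_j\text{ for }j\le k\},
\]
where $h$ ranges over $D$, the $C_j$ range over $\gamma$, the $I_j$ over $\alpha$, and $m\in\omega$ — but it is cleaner to take as the network the family of \emph{finite intersections} of sets $\U_{C,I}=\{g\in C(X);\ g(C)\subset I\}$ with $C\in\gamma$, $I\in\alpha$, intersected with a basic $\tau_\B$-neighbourhood of a point of $D$. The cardinality of this family is $(|\gamma|\cdot\aleph_0)^{<\aleph_0}\cdot |D|\cdot\aleph_0 = nw(X,\B)\,d(\tau_\B)$, which is the desired bound. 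To check it is a network, fix $f\in C(X)$ and a basic open set $[B,\epsilon](f)$ with $B\in\B$ closed; I may assume $B$ closed since $\B$ has a closed base. By density of $D$ pick $h\in D$ with $h\in[B,\epsilon/4](f)$, so that $f\in[B,\epsilon/4](h)$. For each $x\in B$ choose $I_x\in\alpha$ with $f(x)\in I_x$ and $\mathrm{diam}\,I_x<\epsilon/4$; then $\{f^{-1}(I_x);x\in B\}$ together with $\{X\}$ wherever needed is an open cover of $B$, and I want to extract from it, via the $\B$-network, a \emph{finite} list $C_1,\dots,C_k\in\gamma$ with $B\subset\bigcup_j C_j$ and each $C_j$ inside some $f^{-1}(I_{j})$. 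The point where this needs care, and which I expect to be the main obstacle, is that a $\B$-network only guarantees a single $C\in\gamma$ sandwiched between $\cl B$ and a given open $U\supset\cl B$; it does not directly hand you a finite subcover refining a prescribed open cover. I would resolve this by first using that $B=\cl B$ is a member of $\B$ together with the covering $\{f^{-1}(I_x)\}$: since $nw(X)=w(X)\le nw(X,\B)$ and the relevant compactness-type reduction is exactly what underlies Theorem \ref{nwcx}, one extracts a subcover of size $<\delta(X,\B)$ — but to stay within a \emph{finite}-intersection network I instead observe that it suffices to produce, for the \emph{specific} pair $(f,[B,\epsilon](f))$, finitely many members of $\gamma$: shrink by replacing the open cover argument with the fact that $f(B)$ is separable and the preimages of countably many small intervals already cover $B$, then use $\B$-normality-type sandwiching member by member and finally invoke that the resulting open cover of the closed set $B\in\B$ can be taken countable-or-less and that a network set is stable under the finite intersections actually used to trap $f$.

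Concretely, the clean route I would write up is: cover $B$ by countably many sets $f^{-1}(J_n)$, $J_n\in\alpha$, $\mathrm{diam}\,J_n<\epsilon/4$ (possible as $f(B)$ is separable), choose for each $n$ a $C_n\in\gamma$ with $B\cap f^{-1}(\cl{J_n})\subset C_n\subset f^{-1}(J_n')$ for a slightly larger $J_n'\in\alpha$ of diameter $<\epsilon/4$ — here one uses normality of the metric space $X$ to separate the closed set $B\cap f^{-1}(\cl{J_n})$ from the closed set $X\setminus f^{-1}(J_n')$, and the $\B$-network property applied to the member $B\cap f^{-1}(\cl{J_n})$ of $\B$ (it lies in $\B$ by $(b1)$) — and then set the network element to be $[h,\epsilon/4](h)\cap\bigcap_{n}\U_{J_n',C_n}$ for the finitely many $n$ needed once one passes to a finite subcover of the cover $\{f^{-1}(J_n^\circ)\}$ of the (possibly non-compact) $B$; if $B$ is not covered by finitely many, use instead the countable-intersection version and note the cardinality bound $(nw(X,\B)\,d(\tau_\B))^{\aleph_0}=nw(X,\B)\,d(\tau_\B)$ since both factors are infinite. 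One verifies $f$ lies in this set and that every $g$ in it satisfies $g\in[B,\epsilon](f)$: for $x\in B$ pick $n$ with $x\in C_n$, so $g(x)\in J_n'$ and $f(x)\in J_n'$, giving $|g(x)-f(x)|<\epsilon/4<\epsilon$; combined with $f\in[B,\epsilon/4](h)$ and $g\in[h,\epsilon/4](h)$ this even gives the stronger estimate on $[B,\epsilon/2](f)$, with room to spare. The $\tau_\B^s$ case is identical, replacing each occurrence of ``$\B$-cover'' and ``$[B,\epsilon]$'' by the strong versions and using that $\tau_\B^s$ has basic entourages $[B,\epsilon]^s$; Proposition \ref{zero} and the closed-base reduction apply verbatim, and the cardinal count is unchanged.
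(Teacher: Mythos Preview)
Your overall strategy --- bounding $nw(\tau_\B)$ from above by combining a $\B$-network $\gamma$ with a dense set $D$ --- is the right one, and the easy direction is handled correctly. But the execution of the hard inequality has a genuine gap. You try to cover $B$ by preimages $f^{-1}(J_n)$ of small intervals and sandwich each piece with a member of $\gamma$; since $B$ need not be compact you cannot in general pass to a finite subcover, and your fallback ``use the countable-intersection version and note $(nw(X,\B)\,d(\tau_\B))^{\aleph_0}=nw(X,\B)\,d(\tau_\B)$'' is simply false: $\kappa^{\aleph_0}=\kappa$ fails for many infinite cardinals (e.g.\ $\kappa=\aleph_\omega$, by K\"onig). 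So the cardinality bound on your proposed network is not established, and the argument does not close.

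The paper avoids this entire covering detour with a much simpler network: take
\[
\{[C,1/n](g)\;;\;C\in\gamma,\ n\in\omega,\ g\in D\}.
\]
Given $f$, $B\in\B$ closed, and $n$, first pick $g\in D\cap[B,1/(2n)](f)$. The single continuous function $f-g$ satisfies $|f-g|<1/(2n)$ on $\cl B$, hence $|f-g|<1/n$ on some open $U\supset\cl B$; now apply the $\B$-network property \emph{once} to get $C\in\gamma$ with $\cl B\subset C\subset U$. Then $f\in[C,1/n](g)\subset[B,2/n](f)$, and the cardinality of the family is exactly $|\gamma|\cdot\aleph_0\cdot|D|=nw(X,\B)\,d(\tau_\B)$ with no exponentiation needed. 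The key idea you are missing is that one does not need to resolve $f$ into pieces via a cover of $B$: controlling the single function $f-g$ near $\cl B$ and using the $\B$-network just once already does the job. The $\tau_\B^s$ case is then genuinely parallel.
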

\begin{proof}
	From Theorem \ref{nwcx} and $\tau_\B\subset\tau_B^s$ we have that $nw(\tau_\B^s)\geq nw(\tau_\B)\geq nw(X,\B)$. It remains to show that $nw(\tau_\B)\leq nw(X,\B)d(\tau_B)$ and $nw(\tau_\B^s)\leq nw(X,\B)d(\tau_B^s)$. Since the proofs are very similar we will do it only for $\tau_\B$. Let $\gamma$ be a $\B-$network for $X$ with $|\gamma|=nw(X,\B)$ and $D$ be a dense subset of $C(X)$ with $|D|=d(\tau_\B)$. We will show that the system $\{[C,1/n](g);C\in\gamma,n\in\omega,g\in D\}$ is a network for $(C(X),\tau_\B)$. For every $f\in C(X)$, $n\in\omega$ and $B\in\B$ there is $g\in D\cap[B,1/2n](f)$. There is an open $U\supset\cl{B}$ such that $g\in[U,1/n](f)$. Choose $C\in\gamma$ with $\cl{B}\subset C\subset U$ and thus we have $f\in[C,1/n](g)\subset[B,2/n](f)$ which concludes the proof.
\end{proof}
\begin{note}
	Concerning the topology $\tau_\B$ in Theorems \ref{character}, \ref{pseudo} it is enough to suppose that $X$ is a Tychonoff space and in Theorems \ref{tightness}, \ref{nwcx}, \ref{network weight}, that $X$ is Tychonoff such that for every closed $A\subset X$ and every closed $B\in\B$ there is a continuous function $f:X\to R$ such that $f(A)\subset\{0\}$ and $f(B)\subset\{1\}$.
	 Therefore these Theorems generalize \cite[Theorems 4.4.1, 4.3.1, 4.7.1]{mccoy}
\end{note}
In what follows the suppositions from the beginning of this section that $(X,\rho)$ is a metric space and $\B$ has a closed base are mandatory.

From \cite{costantini} we have the following result.
\begin{proposition}\label{basic_density}
	$d(\tau_U)=2^{<\delta(X)}$.
\end{proposition}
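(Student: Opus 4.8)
The plan is to prove the two inequalities separately; the lower bound is elementary, whereas the upper bound requires more and is where \cite{costantini} is used. Observe first that $(C(X),\tau_U)$ is metrizable, since $\{[X,1/n];n\in\omega\}$ is a countable base of entourages; in particular a subset of $C(X)$ is dense if and only if for every $n\in\omega$ and every $f\in C(X)$ it meets $[X,1/n](f)$. By Proposition \ref{delta} I may use $\delta_D(X)$ in place of $\delta(X)$, and I split according to whether $X$ is GK.

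\emph{Lower bound $2^{<\delta(X)}\le d(\tau_U)$.} Fix an infinite cardinal $\m<\delta_D(X)$; then there is a closed discrete $D\subset X$ with $|D|=\m$. Since a metric space is normal, for each $A\subset D$ a Urysohn function separating the disjoint closed sets $A$ and $D\setminus A$ gives $f_A\in C(X)$ with $0\le f_A\le 1$, $f_A(A)=\{1\}$ and $f_A(D\setminus A)=\{0\}$. If $A\ne B$ then $f_A$ and $f_B$ differ by $1$ at some point of $A\triangle B$, hence $\sup_X|f_A-f_B|\ge 1$; consequently the $2^\m$ basic open sets $[X,1/2](f_A)$ are pairwise disjoint and any dense subset meets each of them. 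Thus $d(\tau_U)\ge 2^\m$ for every $\m<\delta_D(X)$, i.e. $d(\tau_U)\ge 2^{<\delta_D(X)}=2^{<\delta(X)}$.

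\emph{Upper bound $d(\tau_U)\le 2^{<\delta(X)}$.} I would distinguish three cases. If $X$ is not GK, then $\delta(X)=d(X)^+$, so $2^{<\delta(X)}=2^{d(X)}$; fixing a dense $S\subset X$ with $|S|=d(X)$ and using that each $f\in C(X)$ is determined by $f|_S$ gives $|C(X)|\le|R|^{d(X)}=2^{d(X)}$, hence $d(\tau_U)\le|C(X)|\le 2^{<\delta(X)}$. If $X$ is GK and $d(X)=\aleph_0$, then GK means every open cover has a finite subcover, so $X$ is compact metrizable, $C(X)$ is separable and $d(\tau_U)=\aleph_0=2^{<\aleph_0}$. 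In the remaining case $X$ is GK with $\lambda:=d(X)>\aleph_0$, and then $\lambda$ is singular with $\mathrm{cf}(\lambda)=\omega$: applying GK to the covers $\{B(x,1/n);x\in X\}$ yields $1/n$-dense sets $C_n$ with $|C_n|<\lambda$, whose union is dense, forcing $\mathrm{cf}(\lambda)=\omega$ (in particular $\lambda$ is a limit cardinal). Writing $\lambda=\sup_k\lambda_k$ with $\lambda_k<\lambda$, for each $n$ one wants a subset of $C(X)$ of size at most $\sum_k 2^{\lambda_k}=2^{<\lambda}$ meeting $[X,1/n](f)$ for all $f$: given $f$, cover $X$ by base members on each of which $f$ oscillates by less than $1/3n$, use GK to extract a subcover of size $<\lambda$, and approximate $f$ by a function with rational values built from a partition of unity subordinate to that subcover.

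\emph{Main obstacle.} The work lies entirely in this last case. Recording a subcover (drawn from a fixed $\sigma$-discrete base of $X$) together with a rational labelling bounds the number of approximants only by $\lambda^{<\lambda}=\sum_k\lambda^{\lambda_k}$, which is too coarse: under GCH this equals $\lambda^+$, while $2^{<\lambda}=\lambda$. The idea (following \cite{costantini}) is that one need not record the whole subcover: GK-ness can be used to arrange that, off a subset of density $<\lambda$, the approximant is a single rational constant, so an approximant is coded by a ``small'' exceptional piece together with its rational values there, which brings the count down to $2^{<\lambda}$. Making this precise --- in particular exhibiting a family of such exceptional pieces of total cardinality $\le 2^{<\lambda}$ --- is the crux, and for it I would follow \cite{costantini}.
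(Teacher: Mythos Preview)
The paper does not prove this proposition at all: it simply attributes the result to \cite{costantini} and states it without argument. Your sketch therefore already goes well beyond what the paper offers. The lower bound you give is complete and correct, and your case split for the upper bound (non-GK, compact, and GK with uncountable density) together with the reductions in the first two cases is the standard route; the identification of the genuine difficulty in the third case, and the reason a naive count only gives $\lambda^{<\lambda}$ rather than $2^{<\lambda}$, is accurate. Since you end up deferring precisely that case to \cite{costantini}, your proposal and the paper ultimately rest on the same external input; you have just unpacked the easy parts that the paper leaves implicit.
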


\begin{theorem}\label{weak_density}
	$2^{<\delta(X,\B)}\leq c(\tau_\B)\leq d(\tau_\B)\leq cf(\B)2^{<\delta(X,\B)}$.
\end{theorem}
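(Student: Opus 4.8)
The statement to prove is the chain $2^{<\delta(X,\B)}\leq c(\tau_\B)\leq d(\tau_\B)\leq cf(\B)\,2^{<\delta(X,\B)}$.

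The middle inequality $c(\tau)\leq d(\tau)$ is the standard fact that cellularity is bounded by density in any topological space, so nothing needs to be done there. The real work is at the two ends.

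First I would prove the lower bound $2^{<\delta(X,\B)}\leq c(\tau_\B)$. Fix $\m<\delta(X,\B)=\delta_D(X,\B)$ (using Proposition~\ref{delta}); then there is a closed discrete $D\in\B$ with $|D|=\m$. For each subset $A\subset D$ I want a nonempty $\tau_\B$-open set $V_A$ with the $V_A$ pairwise disjoint, which gives $c(\tau_\B)\geq 2^\m$. Since $D$ is closed and discrete in the metric space $X$, for each $A\subset D$ the indicator-type function is realized by a genuine continuous $f_A\in C(X)$ with $f_A(x)=1$ for $x\in A$ and $f_A(x)=0$ for $x\in D\setminus A$ (a continuous function on $D$ extends, since $D$ is closed in a metric space and $A$, $D\setminus A$ are relatively clopen in $D$). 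Put $V_A=[D,\tfrac12](f_A)$. If $A\neq A'$, pick $x$ in the symmetric difference, say $x\in A\setminus A'$; then any $g\in V_A$ has $g(x)>\tfrac12$ while any $g\in V_{A'}$ has $g(x)<\tfrac12$, so $V_A\cap V_{A'}=\emptyset$. Hence $c(\tau_\B)\geq 2^\m$ for every $\m<\delta(X,\B)$, i.e. $c(\tau_\B)\geq 2^{<\delta(X,\B)}$.

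Next the upper bound $d(\tau_\B)\leq cf(\B)\,2^{<\delta(X,\B)}$. Let $\B_0$ be a base of $\B$ consisting of closed sets with $|\B_0|=cf(\B)$. The idea is to build a dense set of $(C(X),\tau_\B)$ by, for each $B\in\B_0$, taking a suitably dense family of functions on a neighborhood of $B$ and extending. More precisely, for fixed closed $B\in\B_0$ consider the restriction map $C(X)\to C(B)$ (or better, work with functions dense for the sup-metric on $\cl B$); since $\delta(\cl B)\leq\delta(X,\B)$, the space $(C(\cl B),\text{sup metric})$ has a dense subset of size $\le 2^{<\delta(\cl B)}\le 2^{<\delta(X,\B)}$ by Proposition~\ref{basic_density} applied to $\cl B$ (note $\tau_U$ on $C(\cl B)$ with $\B=\Po(\cl B)$ is the uniform topology, and $\delta(\cl B,\Po)=\delta(\cl B)$). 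For each such restriction pick one continuous extension to $X$ using the Tietze extension theorem (valid since $\cl B$ is closed in the metric, hence normal, space $X$). Let $D$ be the union over $B\in\B_0$ of these extension families; then $|D|\leq cf(\B)\cdot 2^{<\delta(X,\B)}$. To see $D$ is $\tau_\B$-dense: given $f\in C(X)$, $B\in\B$, $\epsilon>0$, choose $B'\in\B_0$ with $B\subset\cl{B'}$, then pick $g\in D$ coming from $B'$ with $\sup_{x\in\cl{B'}}|f(x)-g(x)|<\epsilon$; then $g\in[B,\epsilon](f)$.

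**The main obstacle.** The delicate point in the upper bound is the claim that $(C(\cl B),\text{sup metric})$ has density at most $2^{<\delta(\cl B)}$ and that this transfers cleanly to a $\tau_\B$-dense family in $C(X)$ via Tietze. Proposition~\ref{basic_density} is stated for $\tau_U$ on $C(X)$, which is the topology of uniform convergence, so I must be careful that applying it to the closed metric subspace $\cl B$ really does give $d$ of the uniform topology on $C(\cl B)$ equal to $2^{<\delta(\cl B)}$, and that a dense set there lifts—there is a size-counting subtlety if one tries to extend \emph{every} function, so one should instead extend only the chosen dense family (one extension each), which keeps the cardinality under control. A secondary subtlety is verifying $\delta_D = \delta$ and $\delta(\cl B)\le\delta(X,\B)$ are being used consistently; these are exactly Proposition~\ref{delta} and the definition of $\delta(X,\B)$, so they are available. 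I expect writing the extension argument so that the cardinality bound $cf(\B)\cdot 2^{<\delta(X,\B)}$ is transparently respected to be the part that needs the most care.
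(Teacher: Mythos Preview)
Your proposal is correct and follows essentially the same route as the paper: a cellular family built from $\{0,1\}$-valued functions on a large closed discrete set in $\B$ for the lower bound, and Tietze extensions of $\tau_U$-dense subsets of $C(\cl B)$ (via Proposition~\ref{basic_density}) ranging over a closed base $\B_0$ for the upper bound. One cosmetic point: the sets $[D,\tfrac12](f_A)$ are not in general $\tau_\B$-open, so for the cellularity argument you should take their interiors (which are still nonempty and pairwise disjoint), exactly as the paper does.
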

\begin{proof}
	For the first inequality take an arbitrary $\m<\delta(X,\B)$, then there is a closed $B\in\B$ with $\m<\delta(B)$, i.e. there is a closed discrete $D\subset B$ with $|D|=\m$. Every function $g\in 2^D=\{0,1\}^D$ can be extended to $f_g\in C(X)$. The system $\{int([B,1/2](f_g));g\in 2^D\}$ is cellular in $(C(X),\tau_\B)$, so $2^\m\leq c(\tau_\B)$.

	The second inequality is general. To prove the last inequality we can by Proposition \ref{basic_density} take for every closed $B\in\B$ the system $\F_B\subset C(B)$ which is dense in $(C(B),\tau_U)$ and $|\F_B|\leq2^{<\delta(B)}\leq2^{<\delta(X,\B)}$. Let $\F^*_B\subset C(X)$ be the system of extensions of functions from $\F_B$ and put $\F^*=\bigcup_{B\in\B_0}\F^*_B$, where $\B_0$ is a closed base of $\B$ with $|\B_0|\leq cf(\B)$. One can easily see that $|\F^*|\leq cf(\B)2^{<\delta(X,\B)}$ and $\F^*$ is dense in $(C(X),\tau_\B)$.
\end{proof}

Since for topological groups  $w(\tau)=d(\tau)\chi(\tau)$, we have the following result.
\begin{corollary}\label{weak_base}
	$w(\tau_\B)=cf(\B)2^{<\delta(X,\B)}$.
\end{corollary}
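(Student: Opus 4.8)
The plan is to read off the result from the identity $w(\tau)=d(\tau)\chi(\tau)$, valid for topological groups and recalled immediately before the statement, applied to the topological group $(C(X),\tau_\B)$, together with the two quantitative computations already carried out in this section. First I would use Theorem \ref{character} to replace the character, obtaining $w(\tau_\B)=d(\tau_\B)\,\chi(\tau_\B)=cf(\B)\,d(\tau_\B)$. It then remains only to squeeze $d(\tau_\B)$ between the bounds furnished by Theorem \ref{weak_density}, namely $2^{<\delta(X,\B)}\leq c(\tau_\B)\leq d(\tau_\B)\leq cf(\B)\,2^{<\delta(X,\B)}$.

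For the upper estimate, substituting the right-hand bound gives $w(\tau_\B)=cf(\B)\,d(\tau_\B)\leq cf(\B)\cdot cf(\B)\,2^{<\delta(X,\B)}=cf(\B)\,2^{<\delta(X,\B)}$, where the last equality uses that $cf(\B)$ is an infinite cardinal, so $cf(\B)\cdot cf(\B)=cf(\B)$. For the lower estimate I would argue separately that $w(\tau_\B)\geq\chi(\tau_\B)=cf(\B)$ and that $w(\tau_\B)\geq d(\tau_\B)\geq c(\tau_\B)\geq 2^{<\delta(X,\B)}$, whence $w(\tau_\B)\geq\max\{cf(\B),2^{<\delta(X,\B)}\}=cf(\B)\,2^{<\delta(X,\B)}$, again because both cardinals are infinite and so their product equals their maximum. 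Combining the two inequalities yields the claimed equality.

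I do not expect a genuine obstacle here, since all the real work is already contained in Theorems \ref{character} and \ref{weak_density}; the only points deserving a word of care are that the formula $w=d\chi$ legitimately applies to $(C(X),\tau_\B)$ (this topological group structure was noted at the start of the section) and that the relevant cardinals $cf(\B)$ and $2^{<\delta(X,\B)}$ are infinite, so the cardinal arithmetic collapses products to maxima. One could state the analogous formula for $\tau^s_\B$, but since $d(\tau^s_\B)$ has not been pinned down as precisely in this section, I would not assert it as part of this corollary.
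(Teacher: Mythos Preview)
Your proposal is correct and follows exactly the route indicated by the paper: the corollary is stated without proof, preceded only by the remark that $w(\tau)=d(\tau)\chi(\tau)$ for topological groups, so the intended argument is precisely to combine this identity with Theorems \ref{character} and \ref{weak_density}, as you do. The cardinal-arithmetic manipulations you spell out are the obvious ones, and your cautionary remarks about infinite cardinals and the topological-group structure are appropriate.
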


We will need the following notion that was introduced in \cite{beerlevi2}. We say that a set $S\subset X$ is a \emph{shield} for a set $A\subset X$, iff for every closed $C\subset X$ such that $C\cap S=\emptyset$ there is an $\epsilon>0$ such that $C\cap A^\epsilon=\emptyset$. We say that a family $\mathcal A$ is \emph{shielded from closed sets} provided each $A\in\mathcal A$ has a shield in $\mathcal A$. As one of main results in \cite{beerlevi2} it was proved that $\tau_\B^s$ and $\tau_\B$ coincide on $C(X)$ iff $\B$ is shielded from closed sets.

Let us now define some variants of compactness degree.
\[\delta_0^s(B)=\aleph_0+\min\{\m;\forall\text{closed and discrete }D\subset X\ \exists\epsilon>0:|D\cap B^\epsilon|<\m\},\text{ for } B\subset X,\]
\[\delta_0^s(X,\B)=\sup\{\delta_0^s(\cl{B});B\in\B\},\]
\[\delta_1^s(B)=\aleph_0+\min\{\delta(S); S\text{ is a closed shield for }B\}\]
\[\delta_1^s(X,\B)=\sup\{\delta_1^s(\cl{B});B\in\B\}.\]
\begin{proposition}
For every $B$ holds $\delta(B)\leq\delta_0^s(B)\leq\delta_1^s(B)$ and hence $\delta(X,\B)\leq\delta_0^s(X,\B)\leq\delta_1^s(X,\B)$. Moreover of $\B$ is shielded from closed sets then $\delta_1^s(X,\B)\leq\delta(X,\B)$.
\end{proposition}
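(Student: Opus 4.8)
The plan is to establish the three pointwise inequalities and then pass to the bornology statements by taking suprema over $\{\cl B:B\in\B\}$. Two reductions streamline everything. First, since $\rho(x,B)=\rho(x,\cl B)$ we have $B^\epsilon=\cl B^\epsilon$, so $\delta_0^s$, $\delta_1^s$ and the very notion of shield for $B$ depend only on $\cl B$; as $\delta(X,\B)$ is assembled from the numbers $\delta(\cl B)$, it is enough to treat closed $B$. Second, I will use Proposition \ref{delta}, applied to the subspace $B$ (and to $S$ below), to replace each $\delta(\cdot)$ by $\delta_D(\cdot)$; this is exactly what links these invariants to closed discrete sets.

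For $\delta(B)\le\delta_0^s(B)$ (with $B$ closed) I would check that any $\m$ admissible in the definition of $\delta_0^s(B)$ is admissible in the definition of $\delta_D(B)$: if $D\subset B$ is closed discrete then, $B$ being closed in $X$, $D$ is a closed discrete subset of $X$, so the $\delta_0^s$-property applied to $D$ gives $\epsilon>0$ with $|D|=|D\cap B^\epsilon|<\m$.

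The heart of the matter is $\delta_0^s(B)\le\delta_1^s(B)$. Note first that the minimum defining $\delta_1^s(B)$ is over a nonempty family, since $X$ is trivially a closed shield for $B$. Fix a closed shield $S$ for $B$; I claim $\delta_0^s(B)\le\delta(S)=\delta_D(S)$. Let $\m$ bound the cardinalities of all closed discrete subsets of $S$, and let $D\subset X$ be closed discrete. The key observation — and the only step I expect to be non-routine — is that $D\setminus S$ is closed in $X$: if $x\in\cl{D\setminus S}$ then $x\in D$ (as $D$ is closed), and a neighbourhood of $x$ meeting $D$ only in $x$ (discreteness) forces $x\notin S$, so $x\in D\setminus S$. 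Thus $D\setminus S$ is closed and disjoint from $S$, so the shield property of $S$ produces $\epsilon>0$ with $(D\setminus S)\cap B^\epsilon=\emptyset$; hence $D\cap B^\epsilon\subset D\cap S$, a closed discrete subset of $S$, and $|D\cap B^\epsilon|<\m$. Taking the minimum over all closed shields $S$ gives the inequality.

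The "hence" clause follows by applying the two inequalities to $\cl B$ for each $B\in\B$ and taking suprema. For the last statement, assume $\B$ is shielded from closed sets, fix $B\in\B$, and pick a shield $S_0\in\B$ for $B$. A closed set disjoint from $\cl{S_0}$ is disjoint from $S_0$, so $\cl{S_0}$ is a closed shield for $B$, whence $\delta_1^s(\cl B)=\delta_1^s(B)\le\aleph_0+\delta(\cl{S_0})$; and $\delta(\cl{S_0})$ is one of the terms in the supremum defining $\delta(X,\B)$ because $S_0\in\B$, so $\delta_1^s(\cl B)\le\delta(X,\B)$. Taking the supremum over $B\in\B$ yields $\delta_1^s(X,\B)\le\delta(X,\B)$.
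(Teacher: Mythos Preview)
Your proof is correct and follows essentially the same route as the paper: both arguments hinge on Proposition~\ref{delta} to pass from $\delta$ to $\delta_D$, and the key step for $\delta_0^s(B)\le\delta_1^s(B)$ is identical---observe that $D\setminus S$ is closed, apply the shield property to get $D\cap B^\epsilon\subset D\cap S$, and bound $|D\cap S|$ by $\delta(S)$. The only cosmetic difference is that the paper argues the second inequality contrapositively (take $\kappa<\delta_0^s(B)$ and show $\kappa<\delta(S)$), whereas you argue it directly; you also spell out two small points the paper leaves implicit, namely why $D\setminus S$ is closed and why the shield in $\B$ may be taken closed.
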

\begin{proof}
First inequality follows from Proposition \ref{delta}. For the second suppose that $\kappa<\delta_0^s(B)$ then there is a closed discrete $D\subset X$ such that for every $\epsilon>0$ holds $|D\cap B^\epsilon|\geq\kappa$. Let $S$ be a closed shield for $B$. Since $D\setminus S$ is closed and disjoint from $S$ there is $\epsilon>0$ such that $(D\setminus S)\cap B^\epsilon=\emptyset$
i.e. $D\cap B^\epsilon\subset D\cap S$ therefore $\delta(S)>|D\cap S|\geq|D\cap B^\epsilon|\geq\kappa$ and hence $\delta_1^s(B)>\kappa$. Finally $\delta_0^s(B)\leq\delta_1^s(B)$.

Now suppose that $\B$ is shielded from closed sets. For every closed $B\in\B$ there is $B'\in\B$, a closed shield for $B$, hence $\delta_1^s(B)\leq\delta(B')\leq\delta(X,\B)$ and therefore  $\delta_1^s(X,\B)\leq\delta(X,\B)$.
\end{proof}
\begin{theorem}
	$2^{<\delta_0^s(X,\B)}\leq c(\tau^s_\B)$.
\end{theorem}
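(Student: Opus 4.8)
I want to lower-bound the cellularity $c(\tau^s_\B)$ by $2^{<\delta_0^s(X,\B)}$, so fix an arbitrary cardinal $\m<\delta_0^s(X,\B)$; it suffices to produce a cellular family in $(C(X),\tau^s_\B)$ of size $2^\m$. By definition of $\delta_0^s(X,\B)$ there is a closed $B\in\B$ with $\m<\delta_0^s(\cl B)=\delta_0^s(B)$, which means there is a closed discrete set $D\subset X$ such that for \emph{every} $\epsilon>0$ we have $|D\cap B^\epsilon|\ge\m$. The idea is to mimic the cellularity argument in Theorem \ref{weak_density}, but now the point-separating has to survive the passage to the strong enlargements $B^\delta$, and the set $D$ is no longer inside $B$ — only arbitrarily close to it.

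\textbf{Key steps.} First, pick a countable strictly decreasing sequence $\epsilon_1>\epsilon_2>\cdots\to 0$ and, using the property of $D$, choose inside $D\cap B^{\epsilon_k}$ a subset; more efficiently, since $|D\cap B^\epsilon|\ge\m$ for every $\epsilon$, I would like a single set $E\subset D$ with $|E|=\m$ such that $E$ accumulates on $\cl B$ "from the enlargement side", i.e. $E\subset B^\epsilon$ for every $\epsilon>0$ is too strong in general, so instead I work with a partition-free version: enumerate $D$ as needed and for each $\epsilon>0$ retain that $D\cap B^\epsilon$ is $\m$-sized. The cleanest route is: for each $k\in\omega$ set $D_k=D\cap B^{\epsilon_k}$, so $|D_k|\ge\m$ and $D_1\supset D_2\supset\cdots$; now for every function $g\in 2^{D}=\{0,1\}^D$ (where I may restrict attention to functions that are eventually compatible, but let me keep all of $2^D$), use that $D$ is closed and discrete to extend $g$ to some $f_g\in C(X)$ with $f_g(x)=g(x)$ for $x\in D$ and, crucially, with $f_g$ taking values in $[0,1]$. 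Second, define the basic $\tau^s_\B$-open sets $V_g=\operatorname{int}([B,1/2]^s(f_g))$. Third, I must show $V_g\cap V_h=\emptyset$ whenever $g\ne h$ — equivalently $[B,1/2]^s(f_g)\cap[B,1/2]^s(f_h)=\emptyset$ — and that each $V_g$ is nonempty (the latter is immediate since $f_g\in V_g$, because $[B,1/2]^s(f_g)$ is a $\tau^s_\B$-neighborhood of $f_g$). For disjointness: if $h'\in[B,1/2]^s(f_g)\cap[B,1/2]^s(f_h)$ then there is $\delta>0$ with $|h'(x)-f_g(x)|<1/2$ and $|h'(x)-f_h(x)|<1/2$ for all $x\in B^\delta$; choose $k$ with $\epsilon_k<\delta$, so every point of $D_k\subset B^{\epsilon_k}\subset B^\delta$ satisfies $|f_g(x)-f_h(x)|<1$, forcing $g(x)=h(x)$ for all $x\in D_k$. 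Thus two functions $g,h$ that already differ somewhere in $D_k$ give disjoint $V_g,V_h$. Hence for a fixed $k$ the family $\{V_g:g\in 2^{D_k}\}$ (extending each $g\in 2^{D_k}$ arbitrarily on $D\setminus D_k$, or rather indexing by $g\in 2^{D_k}$ and picking one extension per such $g$) is a cellular family of size $2^{|D_k|}\ge 2^\m$.

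\textbf{The main obstacle.} The subtle point is precisely that $D$ is not contained in $B$, only "strongly near" it, so a single $\delta$ chosen by a witness $h'$ cannot see all of $D$ — it only sees $D\cap B^\delta$, which shrinks as $\delta\to 0$. The resolution, as above, is to fix the index set to be $2^{D_k}$ for one sufficiently small $k$: any competing pair of functions differing on $D_k$ are separated by all $\delta<\epsilon_k$, and this is exactly the range of $\delta$ that a point of $[B,1/2]^s(f_g)\cap[B,1/2]^s(f_h)$ would need... wait, one must be careful in the other direction: the witness $h'$ gets to choose $\delta$, and could choose $\delta$ \emph{larger} than $\epsilon_k$, in which case $D_k\subset B^{\epsilon_k}\subset B^\delta$ still holds since $\epsilon_k<\delta$. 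Good — $B^{\epsilon_k}\subseteq B^\delta$ whenever $\epsilon_k\le\delta$, so any witness with its own $\delta$ either has $\delta\ge\epsilon_k$, handled by $D_k\subset B^\delta$, or $\delta<\epsilon_k$, in which case I instead use $D_{k'}$ for $\epsilon_{k'}<\delta$; but since I have fixed the index family to $2^{D_k}$ in advance, a cleaner statement is: for \emph{any} $\delta>0$, pick $k$ with $\epsilon_k\le\delta$, then $D_k\subset B^\delta$, so differing on $D_k$ already suffices, and since $D_k\subseteq D_1$ for all $k$ it is enough to require the indexing functions to differ somewhere in $\bigcap$—no: $\bigcap_k D_k$ may be empty. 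So the correct formulation is to work with one value $k$ at a time only to extract $2^\m$ and note we need it to hold for the $\delta$ produced by the witness, which we do not control; therefore the right object is the full family indexed by $g\in 2^D$ together with the observation that if $g\ne h$ on $D$ then... no, they might agree on every $D_k$. Hence the argument genuinely requires choosing, once and for all, a sufficiently fine $k_0$ with $|D_{k_0}|\ge\m$ and indexing by $g\in 2^{D_{k_0}}$; then for $g\ne h$ in $2^{D_{k_0}}$ and any witness $h'$ with parameter $\delta$, if $\delta\ge\epsilon_{k_0}$ we are done as $D_{k_0}\subset B^{\epsilon_{k_0}}\subset B^\delta$, while if $\delta<\epsilon_{k_0}$ we pick $k_1>k_0$ with $\epsilon_{k_1}\le\delta$ and note $D_{k_1}\subseteq D_{k_0}$, so $g\ne h$ on $D_{k_0}$ does not immediately give a disagreement on $D_{k_1}$ — the obstruction reappears. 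The genuine fix, which I will carry out in the proof, is to choose the $f_g$ so that on the closed discrete set $D$ they take values in $\{0,1\}$ \emph{and} are locally constant on a neighborhood, then replace "differ on $D_k$" by "differ on $D$" and use that any $x\in D$ lies in $B^\delta$ for the particular $\delta$ only if $x\in D\cap B^\delta=D_k$ for small $k$; so ultimately the correct and honest statement is the one realized by taking a \emph{single} $B^{\epsilon}$ with $\epsilon$ small: since $|D\cap B^\epsilon|\ge\m$ holds for the specific $\epsilon$ the witness hands us, and this must hold for all $\epsilon$ simultaneously to be of use, the clean path is to note that it is enough that $g,h$ differ at some point of $D\cap B^\epsilon$ for every $\epsilon>0$; i.e. index by $g\in 2^{D}$ modulo the equivalence "agree on a tail", which still has $2^\m$ classes because $|D\cap B^{\epsilon_k}|\ge\m$ for each $k$ and one can diagonalize. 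This bookkeeping — reconciling "the witness picks $\delta$" with "the bound $|D\cap B^\delta|\ge\m$ holds for every $\delta$" — is the one place requiring real care; everything else (extending $0/1$-valued functions off a closed discrete set, nonemptiness of $V_g$, the $1/2+1/2<1$ triangle inequality) is routine.
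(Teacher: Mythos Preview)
Your setup is correct and you have put your finger precisely on the real difficulty: a witness $h'\in[B,1/2]^s(f_g)\cap[B,1/2]^s(f_h)$ comes with its own $\delta>0$, so the disagreement between $f_g$ and $f_h$ must occur inside $B^\delta$ for \emph{every} $\delta>0$, not just for one fixed $\epsilon_{k_0}$. You also correctly rule out the naive fix of indexing by $2^{D_{k_0}}$. But your resolution --- ``index by $2^D$ modulo `agree on a tail', which still has $2^\m$ classes because one can diagonalize'' --- is not a proof; it is exactly the step that carries all the content, and as stated it is not even clearly true. What you need is a family $\{f_g:g\in 2^\m\}\subset C(X)$ such that for every pair $g\neq h$ and every $\delta>0$ there is a point of $D\cap B^\delta$ at which $f_g$ and $f_h$ differ by $1$. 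Producing such a family is not automatic from $|D\cap B^\epsilon|\ge\m$ alone.

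The paper's proof fills this gap via a case split. Put $D_n=D\cap B^{1/n}$, $D'_n=D_n\setminus D_{n+1}$, $D_\omega=D\cap B$, so $D_n=\bigcup_{k\ge n}D'_k\cup D_\omega$. If $|D_\omega|\ge\m$ you are in the easy situation (these points lie in every $B^\delta$) and your original argument works with $D_\omega$ in place of $D$. Otherwise $|D_\omega|<\m$, and a short cardinality count (splitting further on whether $cf(\m)>\aleph_0$ or $cf(\m)=\aleph_0$) shows that infinitely many of the annuli $D'_{k_n}$ are large: one can pick $k_1<k_2<\cdots$ and points $x^n_\gamma\in D'_{k_n}$ indexed by $\gamma<\m$ (or by $\gamma<\m_n$ with $\sup_n\m_n=\m$ in the countable-cofinality case). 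Now define $f_g(x^n_\gamma)=g(\gamma)$. The point is that the \emph{same} coordinate $\gamma$ recurs at every level $n$, so if $g(\gamma_0)\neq h(\gamma_0)$ then for any $\delta>0$ one picks $n$ with $1/k_n<\delta$ and gets $x^n_{\gamma_0}\in D'_{k_n}\subset B^{1/k_n}\subset B^\delta$ witnessing $|f_g-f_h|=1$. This ``replicate each coordinate across all scales'' construction is the diagonalization you gestured at; without making it explicit, the proof is incomplete.
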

\begin{proof}
	For every $\m<\delta_0^s(X,\B)$ there is closed $B\in\B$ with $\m<\delta_0^s(B)$, so there is closed discrete $D\subset X$ such that for every $\epsilon>0$, $|D\cap B^\epsilon|\geq\m$.
	For every $n\in\omega$ denote $D_n=D\cap B^{1/n}$, $D'_n=D_n\setminus D_{n+1}$ and $D_\omega=D\cap B$. We have  $D_n=\bigcup_{k\geq n}D'_k\cup D_\omega$.
	Let us consider following three cases:
	
	\textit{Case 1:} $|D_\omega|\geq\m$. Every function $g\in 2^{D_\omega}=\{0,1\}^{D_\omega}$ has its extension $f_g\in C(X)$. The system $\{int([B,1/2]^s(f_g));g\in 2^{D_\omega}\}$ is cellular in $(C(X),\tau^s_\B)$ so $2^\m\leq c(\tau^s_\B)$.
	
	\textit{Case 2:} $|D_\omega|<\m$ and $cf(\m)>\aleph_0$. For every $n\in\omega$ there is $k_n\geq n$ such that $|D'_{k_n}|\geq\m$. We can suppose that $k_{n+1}>k_n$ so we have $D'_{k_n}\supset\{x^n_\gamma;\gamma<\m\}$ such that $x^n_\gamma=x^m_\sigma$ iff $m=n$ and $\gamma=\sigma$. For every $g\in 2^\m$ there is $f_g\in C(X)$ such that $f_g(x^n_\gamma)=g(\gamma)$ for every $n\in\omega$ and $\gamma<\m$. The system $\{int([B,1/2]^s(f_g));g\in 2^\m\}$ is cellular in $(C(X),\tau^s_\B)$.
	
	\textit{Case 3:} $|D_\omega|<\m$ and $cf(\m)=\aleph_0$. There is a sequence $\{\m_n<\m;n\in\omega\}$ such that $\m_{n+1}>\m_n$ and $\sup\{\m_n;n\in\omega\}=\m$. For every $n\in\omega$ there is $k_n\geq n$ such that $|D'_{k_n}|\geq\m_n$. We can suppose that $k_{n+1}>k_n$ so we have $D'_{k_n}\supset\{x^n_\gamma;\gamma<\m_n\}$ such that $x^n_\gamma=x^m_\sigma$ iff $m=n$ and $\gamma=\sigma$. For every $g\in 2^\m$ there is $f_g\in C(X)$ such that $f_g(x^n_\gamma)=g(\gamma)$ for every $n\in\omega$ and $\gamma<\m_n$. The system $\{int([B,1/2]^s(f_g));g\in 2^\m\}$ is cellular in $(C(X),\tau^s_\B)$.
\end{proof}

\begin{theorem}\label{strong_density2}
	$d(\tau^s_\B)\leq cf(\B)2^{<\delta_1^s(X,\B)}$
\end{theorem}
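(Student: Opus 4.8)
The plan is to follow the proof of Theorem~\ref{weak_density}, replacing each member $B$ of a base of $\B$ by a closed shield of $B$ of small compactness degree. Fix a closed base $\B_0$ of $\B$ with $|\B_0|\le cf(\B)$. Since $\B_0$ is cofinal in $\B$ and $B_1\subseteq B_2$ forces $B_1^\delta\subseteq B_2^\delta$, we have $[B_2,\epsilon]^s(f)\subseteq[B_1,\epsilon]^s(f)$ whenever $B_1\subseteq B_2$, so to prove density it suffices to hit the $\tau^s_\B$-neighbourhoods $[B,\epsilon]^s(f)$ with $B\in\B_0$. For each $B\in\B_0$ fix a closed shield $S_B$ for $B$ with $\delta(S_B)\le\delta_1^s(X,\B)$: this is possible because $\delta_1^s(\cl B)=\delta_1^s(B)\le\delta_1^s(X,\B)$ and the class of closed shields of $B$ is nonempty (for instance $X$ itself), so the minimum defining $\delta_1^s(B)$ is attained. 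As $S_B$ is a closed subspace of the metric space $X$, Proposition~\ref{basic_density} applied to $S_B$ yields a set $\F_B\subset C(S_B)$ dense in $(C(S_B),\tau_U)$ with $|\F_B|\le 2^{<\delta(S_B)}\le 2^{<\delta_1^s(X,\B)}$; by Tietze's extension theorem every member of $\F_B$ extends continuously to $X$, and we place one such extension of each into a family $\F^*_B\subset C(X)$. Put $\F^*=\bigcup_{B\in\B_0}\F^*_B$; then $|\F^*|\le cf(\B)\,2^{<\delta_1^s(X,\B)}$, so everything reduces to showing that $\F^*$ is dense in $(C(X),\tau^s_\B)$.

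For the density argument, let $f\in C(X)$, $B\in\B_0$ and $\epsilon>0$ be given; we must produce $h\in\F^*$ with $h\in[B,\epsilon]^s(f)$. Since $\F_B$ is dense in $(C(S_B),\tau_U)$, there is $\phi\in\F_B$ with $|f(x)-\phi(x)|<\epsilon/2$ for all $x\in S_B$. Let $h\in\F^*_B$ be the chosen extension of $\phi$. Then $|f(x)-h(x)|<\epsilon/2$ for all $x\in S_B$, so the open set $W=\{x\in X;|f(x)-h(x)|<\epsilon\}$ contains $S_B$.

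The crucial step is that the shield property now converts this control on $S_B$ into control on an open enlargement of $B$: the set $C:=X\setminus W$ is closed and disjoint from $S_B$, so by the definition of a shield there is $\delta>0$ with $C\cap B^\delta=\emptyset$, i.e.\ $B^\delta\subset W$, and therefore $|f(x)-h(x)|<\epsilon$ for every $x\in B^\delta$ — which is exactly the statement $h\in[B,\epsilon]^s(f)$. This proves that $\F^*$ is dense and hence $d(\tau^s_\B)\le cf(\B)\,2^{<\delta_1^s(X,\B)}$. The one non-routine ingredient is this last step — the observation that, for a closed set, being a shield for $B$ means precisely that every open superset of the shield contains some enlargement $B^\delta$; the remaining parts (reducing to a closed base, Tietze extension, and the monotonicity $\m\le\n\Rightarrow 2^{<\m}\le 2^{<\n}$) are routine.
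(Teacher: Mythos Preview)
Your proof is correct and follows essentially the same route as the paper's: pick a closed base, replace each member by a closed shield of minimal compactness degree, take a dense set in $C(S_B)$ via Proposition~\ref{basic_density}, extend by Tietze, and use the shield property to turn uniform closeness on $S_B$ into closeness on some $B^\delta$. The only cosmetic difference is that you approximate to within $\epsilon/2$ on $S_B$ where the paper just uses $\epsilon$ (since the extension agrees with $\phi$ on $S_B$, the slack is unnecessary), but this is harmless.
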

\begin{proof}
Let $\B_0$ be a closed base of $\B$ such that $|\B_0|=cf(\B)$. For every $B\in\B_0$ there is a closed shield $S_B$ with $\delta(S_B)=\delta_1^s(B)$. Let $\F_B$ be dense in $(C(S_B),\tau_U)$ with $|\F_B|\leq2^{<\delta(S_B)}$. For every $f\in\F_B$ there is $f^*:X\to R$ a continuous extension of $f$. Let $\F^*_B=\{f^*;f\in\F_B\}$.  Let $\F=\bigcup_{B\in\B_0}\F^*_B$ then $|\F|\leq cf(\B)2^{<\delta_1^s(X,\B)}$. We will prove that $\F$ is dense in $(C(X),\tau_\B^s)$. Take an arbitrary basic open set of the form $[B,\epsilon]^s(g)$, where $B\in\B_0$, $\epsilon>0$ and $g\in C(X)$. There is $f\in\F^*_B$ such that for every $x\in S_B$ holds $|f(x)-g(x)|<\epsilon$ hence the set $A=\{x;|f(x)-g(x)|\geq\epsilon\}$ is closed and disjoint from $S_B$ and therefore there is a $\delta>0$ such that $B^\delta\cap A=\emptyset$ i.e. $f\in[B,\epsilon]^s(g)$.



\end{proof}
\begin{corollary}
	$cf(\B)2^{<\delta_0^s(X,\B)}\leq w(\tau^s_\B)\leq cf(\B)2^{<\delta_1^s(X,\B)}$.
\end{corollary}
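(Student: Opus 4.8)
The plan is to obtain both inequalities as a direct bookkeeping consequence of the identity $w(\tau)=d(\tau)\chi(\tau)$, which holds for every topological group, together with the cardinal invariants of $(C(X),\tau^s_\B)$ already established above. By Theorem \ref{character} we have $\chi(\tau^s_\B)=cf(\B)$, and since $(C(X),\tau^s_\B)$ is a topological group this gives $w(\tau^s_\B)=d(\tau^s_\B)\,cf(\B)$.

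For the upper bound I would invoke Theorem \ref{strong_density2}, namely $d(\tau^s_\B)\leq cf(\B)2^{<\delta_1^s(X,\B)}$, and conclude
\[
w(\tau^s_\B)=d(\tau^s_\B)\,cf(\B)\leq cf(\B)2^{<\delta_1^s(X,\B)}\,cf(\B)=cf(\B)2^{<\delta_1^s(X,\B)},
\]
where the last equality uses $cf(\B)\cdot cf(\B)=cf(\B)$, valid because $cf(\B)$ is infinite by definition.

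For the lower bound I would use the general inequalities $\chi(\tau^s_\B)\leq w(\tau^s_\B)$ and $c(\tau^s_\B)\leq d(\tau^s_\B)\leq w(\tau^s_\B)$. Combining the first with $\chi(\tau^s_\B)=cf(\B)$ (Theorem \ref{character}) and the second with the Theorem asserting $2^{<\delta_0^s(X,\B)}\leq c(\tau^s_\B)$ yields $w(\tau^s_\B)\geq\max\{cf(\B),\,2^{<\delta_0^s(X,\B)}\}$. Both cardinals on the right are infinite: $cf(\B)\geq\aleph_0$ by definition, and $2^{<\delta_0^s(X,\B)}\geq 2^{<\aleph_0}=\aleph_0$ since $\delta_0^s(X,\B)\geq\aleph_0$. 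Hence their maximum coincides with their product, giving $w(\tau^s_\B)\geq cf(\B)2^{<\delta_0^s(X,\B)}$.

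I do not expect a genuine obstacle here; the corollary is a formal consequence of Theorems \ref{character} and \ref{strong_density2}, the cellularity estimate immediately preceding Theorem \ref{strong_density2}, and elementary cardinal arithmetic. The only point worth a remark is the systematic replacement of products of infinite cardinals by maxima, which is what turns the three component bounds into the stated two-sided estimate.
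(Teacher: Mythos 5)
Your proof is correct and follows the route the paper intends: the upper bound from $w(\tau^s_\B)=d(\tau^s_\B)\chi(\tau^s_\B)$ together with Theorems \ref{character} and \ref{strong_density2}, and the lower bound from $w\geq\chi=cf(\B)$ and $w\geq d\geq c\geq 2^{<\delta_0^s(X,\B)}$. The cardinal arithmetic (replacing products of infinite cardinals by maxima) is exactly the bookkeeping needed, so there is nothing to add.
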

Finally let us discuss cases when cardinal invariants are countable. We will need the following lemmas (in which we again leave the assumption that $X$ is a metric space). The first is a consequence of the main result in \cite{vidossich} and \cite[Corollary 4.2.2]{mccoy}. Recall that a space is \emph{submetrizable}, if it has a coarser metrizable topology.
\begin{lemma}\label{compdens}
	Let $X$ be a Tychonoff space. The following are equivalent.
	\begin{enumerate}
		\item $d(\tau_\F)=\aleph_0$,
		\item $d(\tau_\K)=\aleph_0$,
		\item $X$ has a coarser separable metrizable topology,
		\item $X$ is submetrizable and $d(X)\leq 2^{\aleph_0}$.
	\end{enumerate}
\end{lemma}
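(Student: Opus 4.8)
The plan is to prove the four conditions equivalent through the cycle $(2)\Rightarrow(1)\Rightarrow(3)\Rightarrow(2)$, supplemented by the (elementary) equivalence $(3)\Leftrightarrow(4)$; what the cited results of Vidossich \cite{vidossich} and \cite[Corollary 4.2.2]{mccoy} really supply is the step $(3)\Rightarrow(2)$. Write $\tau$ for the topology of $X$. The implication $(2)\Rightarrow(1)$ is immediate: $\F\subset\K$ gives $\tau_\F\subset\tau_\K$ on $C(X)$, and density cannot increase when we pass to a coarser topology, so $d(\tau_\F)\le d(\tau_\K)=\aleph_0$. For $(1)\Rightarrow(3)$ I would fix a $\tau_\F$-dense sequence $(f_n)_{n\in\omega}$ in $C(X)$ and first check that it separates points: if $x\ne y$, complete regularity yields $f\in C(X)$ with $f(x)=0$, $f(y)=1$, and the nonempty $\tau_\F$-open set $\{g : |g(x)|<1/3,\ |g(y)-1|<1/3\}$ contains some $f_n$, whence $f_n(x)\ne f_n(y)$; then $e=(f_n)_n\colon X\to R^{\omega}$ is a continuous injection, so the initial topology it induces on $X$ is separable metrizable and coarser than $\tau$.

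For $(3)\Leftrightarrow(4)$: if $X$ has a coarser separable metrizable topology then it is submetrizable, and since a separable metrizable space has at most $2^{\aleph_0}$ points we get $|X|\le 2^{\aleph_0}$, hence $d(X)\le 2^{\aleph_0}$. Conversely, suppose $\tau'\subset\tau$ is a coarser metrizable topology and $D$ is $\tau$-dense with $|D|\le 2^{\aleph_0}$; then $D$ is $\tau'$-dense, so $w(X,\tau')=d(X,\tau')\le 2^{\aleph_0}$. I would fix a $\sigma$-discrete base $\bigcup_{n}\mathcal{B}_n$ for $\tau'$ and, for each $n$, an injection $U\mapsto(c_k(U))_{k\in\omega}$ of $\mathcal{B}_n$ into $[0,1]^{\omega}$ with $c_0\equiv 1$ (possible because $|\mathcal{B}_n|\le 2^{\aleph_0}$), and set $g_{n,k}=\sum_{U\in\mathcal{B}_n}c_k(U)\,h_U$, where $h_U\ge 0$ is $\tau'$-continuous and strictly positive exactly on $U$. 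Discreteness of $\mathcal{B}_n$ makes each $g_{n,k}$ locally equal to a single summand, hence well defined and $\tau'$-continuous, and a short computation—comparing, for $x\in U$ and $y\in V$ with $U\ne V$, the values $c_k(U)h_U(x)$ and $c_k(V)h_V(y)$—shows that the countable family $\{g_{n,k}\}$ separates points of $X$; so again $X$ has a coarser separable metrizable topology.

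The step I expect to be the real obstacle is $(3)\Rightarrow(2)$, and this is exactly where the cited literature is used. Assume $\tau'\subset\tau$ is a coarser separable metrizable topology. Then $(X,\tau')$ is second countable, hence cosmic, so the space $C(X,\tau')$ with the compact-open topology has countable network weight and therefore a countable dense subset $S$; note $S\subset C(X,\tau')\subset C(X)$. I claim $S$ is already $\tau_\K$-dense in $C(X)$ for the original topology. Given $f\in C(X)$, a $\tau$-compact $K\subset X$, and $\epsilon>0$: $K$ is $\tau'$-compact, and since $\tau'$ is Hausdorff and coarser than $\tau$ the two topologies agree on $K$, so $f|_K$ is $\tau'$-continuous; as $K$ is $\tau'$-closed and $(X,\tau')$ is normal, Tietze's theorem extends $f|_K$ to some $\tilde f\in C(X,\tau')$, and density of $S$ in the compact-open topology on $C(X,\tau')$ gives $s\in S$ with $\sup_K|s-\tilde f|<\epsilon$, i.e.\ $s\in[K,\epsilon](f)$. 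Hence $d(\tau_\K)=\aleph_0$. The delicate ingredients here—the coincidence of $\tau$ and $\tau'$ on compacta, transporting $f$ into $C(X,\tau')$ via Tietze, and the fact that compact-open function spaces over a second countable domain have countable network weight—are precisely what Vidossich's main result and \cite[Corollary 4.2.2]{mccoy} package, so in the write-up I would invoke those two statements for $(3)\Rightarrow(2)$ (equivalently, for $(1)\Leftrightarrow(2)$ together with $(2)\Leftrightarrow(4)$).
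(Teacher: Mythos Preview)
The paper does not actually prove this lemma: it simply records it as ``a consequence of the main result in \cite{vidossich} and \cite[Corollary 4.2.2]{mccoy}'' and moves on, so there is no in-house argument to compare against. Your cycle $(2)\Rightarrow(1)\Rightarrow(3)\Rightarrow(2)$ together with $(3)\Leftrightarrow(4)$ is correct and is precisely the content those citations package: $(1)\Rightarrow(3)$ via the separating family and the diagonal map into $R^\omega$ is the standard Noble--Vidossich argument, your Tietze-plus-coincidence-of-topologies-on-compacta step for $(3)\Rightarrow(2)$ is exactly how one transfers a countable dense set from $C_k(X,\tau')$ to $C_k(X,\tau)$, and the $\sigma$-discrete-base construction for $(4)\Rightarrow(3)$ is a clean way to produce countably many separating functions when the weight is at most $2^{\aleph_0}$. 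The only point worth making explicit in a final write-up is why each discrete family $\mathcal B_n$ has cardinality $\le 2^{\aleph_0}$ (it is cellular and $c=w$ in metrizable spaces), and why $C_k(X,\tau')$ is separable for second countable $\tau'$ (e.g.\ Stone--Weierstrass on compacta, or $nw(C_k(X))=nw(X)$ from \cite{mccoy}); both are routine.
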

\begin{lemma}\label{compcel}\cite[4.9.2a]{mccoy}
	Let $X$ be a Tychonoff submetrizable space, then $c(\tau_\K)=\aleph_0$.
\end{lemma}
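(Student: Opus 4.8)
The plan is to work directly with the compact--open topology $\tau_\K$ and show that every family of $\aleph_1$ nonempty open subsets of $(C(X),\tau_\K)$ has two members that meet. Since the sets $[K,\epsilon](f)$ with $K\subseteq X$ compact, $\epsilon>0$, $f\in C(X)$ form a base for $\tau_\K$ (and a putative uncountable cellular family refines to one of basic sets of the same size), it suffices to treat such basic sets; so I start from $\{[K_\alpha,\epsilon_\alpha](f_\alpha):\alpha<\omega_1\}$ and, thinning to an uncountable subfamily, assume $\epsilon_\alpha\equiv\epsilon$ for a fixed $\epsilon>0$ and each $K_\alpha$ compact.

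The key mechanism is a patching lemma: if $\alpha\ne\beta$ satisfy $\sup_{K_\alpha\cap K_\beta}|f_\alpha-f_\beta|\le\epsilon/2$ --- in particular whenever $K_\alpha\cap K_\beta=\emptyset$ --- then $[K_\alpha,\epsilon](f_\alpha)\cap[K_\beta,\epsilon](f_\beta)\ne\emptyset$. To see this, put $g=\tfrac12(f_\alpha+f_\beta)$ on the compact set $A=K_\alpha\cap K_\beta$, so $|g-f_\alpha|\le\epsilon/4$ and $|g-f_\beta|\le\epsilon/4$ on $A$; by the Tietze theorem on the normal spaces $K_\alpha$ and $K_\beta$ extend $g|_A-f_\alpha|_A$ over $K_\alpha$ and $g|_A-f_\beta|_A$ over $K_\beta$ with the same sup bound $\epsilon/4$, and add $f_\alpha$, resp.\ $f_\beta$; the two formulas agree on $A$ and define a continuous function on the compact set $K_\alpha\cup K_\beta$, which, being $C^*$-embedded in the Tychonoff space $X$, extends to the required $g\in C(X)$. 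Thus the whole problem reduces to \emph{finding} $\alpha\ne\beta$ with $f_\alpha$ and $f_\beta$ that are $\tfrac\epsilon2$-close on $K_\alpha\cap K_\beta$.

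This is where submetrizability enters. Fix a coarser metric $d$ on $X$; then every $\tau$-compact subset of $X$ is compact --- hence separable and totally bounded --- in $d$, and each $f_\alpha|_{K_\alpha}$ is $d$-uniformly continuous (this is the same phenomenon underlying Lemma \ref{compdens}). Refining inside $\omega_1$ I would then fix: a single scale $r>0$ at which every $f_\alpha$ oscillates by less than $\epsilon/8$ over pairs of points of $K_\alpha$ within $d$-distance $r$ (possible since the oscillation tends to $0$ with $r$, so some $r=1/k$ serves uncountably many $\alpha$); a common number $m$ of centres of a finite $d$-$\tfrac r2$-net $\{x_1^\alpha,\dots,x_m^\alpha\}\subseteq K_\alpha$ covering $K_\alpha$; common rationals $q_1,\dots,q_m$ with $|f_\alpha(x_i^\alpha)-q_i|<\epsilon/16$; and, via the $\Delta$-system lemma applied to the finite sets $\{x_1^\alpha,\dots,x_m^\alpha\}$, a common root. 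For the final uncountable subfamily one then knows that every value of $f_\alpha$ on $K_\alpha$ lies within $\epsilon/4$ of some $q_i$, and the aim is to deduce from the coded data that for two indices the inequality $|f_\alpha-f_\beta|\le\epsilon/2$ holds on $K_\alpha\cap K_\beta$, so that the patching lemma applies.

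The step I expect to be the real obstacle is exactly this last inference. The $\Delta$-system makes the finite nets of different indices disjoint off the root \emph{as sets}, but two net centres from different indices may still be $d$-arbitrarily close, so a point of $K_\alpha\cap K_\beta$ need not fall into ``matching'' balls on the $\alpha$-side and the $\beta$-side, and $(X,d)$ carries no global countable structure with which to align them; overcoming this is where one must use the $d$-equicontinuity and total boundedness of $\{f_\alpha|_{K_\alpha}\}$ in a more quantitative way. An alternative reformulation that may help: $(C(X),\tau_\K)$ embeds as a dense subspace of the inverse limit $\varprojlim_{K}C(K)$ of the separable Banach spaces $C(K)$ over the directed family of compact subsets of $X$, so $c(\tau_\K)$ equals the cellularity of that inverse limit, and one argues cellularity there. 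In any case the reductions of the first three paragraphs are routine; the weight of the proof sits in closing this gap.
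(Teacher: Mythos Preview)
The paper does not prove this lemma; it is quoted directly from \cite[4.9.2a]{mccoy}, so there is no argument in the text to compare yours against.

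Your patching lemma is correct (compact sets are $C$-embedded in a Tychonoff space, so the extension step goes through), and the reduction to finding $\alpha\ne\beta$ with $\sup_{K_\alpha\cap K_\beta}|f_\alpha-f_\beta|\le\epsilon/2$ is valid. But the gap you flag in the $\Delta$-system step is genuine and not a technicality: the $\Delta$-system makes the finite $d$-nets disjoint \emph{as sets} while saying nothing about $d$-distances between centres coming from different indices, and since $(X,d)$ need not be separable there is no countable grid to which the centres can be pinned; no amount of further thinning inside $\omega_1$ manufactures one. Your alternative via the embedding into $\varprojlim_K C(K)$ is the right direction and is essentially the route taken in the cited reference, but it is also incomplete as stated. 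Knowing that $\prod_K C(K)$ is CCC (or has calibre~$\aleph_1$) is not enough, because neither property passes to arbitrary subspaces. Two observations close the argument: first, the image of $C(X)$ is \emph{dense} in the inverse limit (a basic open set of the inverse limit depends, after reduction, on a single compact coordinate $K$, and any element of $C(K)$ extends to $C(X)$), so cellularity does transfer; second, this same reduction --- that finite unions of compacta are again compact metrizable, with separable $C(K)$ --- together with a Shanin-type calibre argument is what McCoy--Ntantu use to obtain calibre~$\aleph_1$ for $(C(X),\tau_\K)$ outright. Supplying those two pieces, rather than refining the net bookkeeping, is what your proof needs.
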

\begin{theorem}\cite{casdiho}
	Let $(X,\rho)$ be a metric space.
	\begin{enumerate}
		\item $c(\tau_\B)=\aleph_0$ iff  $c(\tau_\B^s)=\aleph_0$ iff $\B$ has a compact base;
		\item $w(\tau_\B)=\aleph_0$ iff  $w(\tau_\B^s)=\aleph_0$ iff $\B$ has a countable compact base;
		\item $nw(\tau_\B)=\aleph_0$ iff  $nw(\tau_\B^s)=\aleph_0$ iff $hd(\tau_\B)=\aleph_0$ iff  $hd(\tau_\B^s)=\aleph_0$ iff $hL(\tau_\B)=\aleph_0$ iff  $hL(\tau_\B^s)=\aleph_0$ iff $\B$ has a compact base and $X$ is separable;
		\item $d(\tau_\B)=\aleph_0$ iff  $d(\tau_\B^s)=\aleph_0$ iff $\B$ has a compact base and $X$ has a coarser separable metrizable topology.
	\end{enumerate}
\end{theorem}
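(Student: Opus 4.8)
The plan is to read all four equivalences off the cardinal function formulas already proved, using three elementary observations. \emph{(i)} For a metric space, $\delta(X,\B)=\aleph_0$ holds iff every closed member of $\B$ is compact, i.e.\ iff $\B\subseteq\K$, iff $\B$ has a compact base; in that case $\tau_\B=\tau^s_\B$. Moreover $2^{<\delta(X,\B)}=\aleph_0$ (resp.\ $2^{<\delta_0^s(X,\B)}=\aleph_0$) is equivalent to $\delta(X,\B)=\aleph_0$ (resp.\ $\delta_0^s(X,\B)=\aleph_0$), and the latter forces $\delta(X,\B)=\aleph_0$ since $\delta\le\delta_0^s$. \emph{(ii)} Along $\tau_\F\subseteq\tau_\B\subseteq\tau^s_\B$ (and $\tau_\B\subseteq\tau_\K$ when $\B\subseteq\K$) the invariants $c$, $d$, $hd$, $hL$ are monotone — a dense set of a finer topology is dense in a coarser one, a cellular family of a coarser one is cellular in a finer one, subcovers descend through a refinement, and these facts relativize to subspaces; also $nw(\tau_\B)\le nw(\tau^s_\B)$ by Theorem \ref{network weight}, and $w=d\chi$ on these topological groups. \emph{(iii)} $nw(X,\B)\ge nw(X)=w(X)=d(X)$ because $X$ is metric.

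For \emph{(1)}: if $c(\tau_\B)=\aleph_0$ then $2^{<\delta(X,\B)}=\aleph_0$ by Theorem \ref{weak_density}, so $\delta(X,\B)=\aleph_0$ and $\B$ has a compact base; and $c(\tau^s_\B)=\aleph_0$ gives $c(\tau_\B)\le c(\tau^s_\B)=\aleph_0$ (alternatively use $2^{<\delta_0^s(X,\B)}\le c(\tau^s_\B)$ together with $\delta\le\delta_0^s$). Conversely $\B\subseteq\K$ gives $\tau_\B=\tau^s_\B\subseteq\tau_\K$, and since $X$ is a submetrizable Tychonoff space, $c(\tau_\B)=c(\tau^s_\B)\le c(\tau_\K)=\aleph_0$ by Lemma \ref{compcel}. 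For \emph{(2)}: by Corollary \ref{weak_base}, $w(\tau_\B)=cf(\B)\,2^{<\delta(X,\B)}=\aleph_0$ iff $cf(\B)=\aleph_0$ and $\delta(X,\B)=\aleph_0$, i.e.\ iff $\B$ has a countable base and a compact base; enclosing each member of a countable base in a compact member of a compact base gives a countable compact base, and the converse is trivial. For $\tau^s_\B$, $w(\tau_\B)=d(\tau_\B)\chi(\tau_\B)\le d(\tau^s_\B)\chi(\tau^s_\B)=w(\tau^s_\B)$ by Theorem \ref{character}, so $w(\tau^s_\B)=\aleph_0$ implies $w(\tau_\B)=\aleph_0$, while a countable compact base yields $\tau_\B=\tau^s_\B$.

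For \emph{(4)}: $d(\tau^s_\B)=\aleph_0\Rightarrow d(\tau_\B)=\aleph_0\Rightarrow c(\tau_\B)=\aleph_0\Rightarrow\B$ has a compact base by \emph{(1)}; then $\tau_\F\subseteq\tau_\B$ gives $d(\tau_\F)=\aleph_0$, so $X$ has a coarser separable metrizable topology by Lemma \ref{compdens}; conversely such a topology gives $d(\tau_\K)=\aleph_0$ (Lemma \ref{compdens}) and $\B\subseteq\K$ gives $d(\tau_\B)=d(\tau^s_\B)\le d(\tau_\K)=\aleph_0$. For \emph{(3)}: if $\B$ has a compact base and $X$ is separable, then $\delta(X,\B)=\aleph_0$ yields $nw(\tau_\B)=w(X)=\aleph_0$ by Corollary \ref{nwcxcor}, and since $\tau_\B=\tau^s_\B$ and $hd,hL\le nw$, all six invariants are $\aleph_0$. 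Conversely, if either $nw$ is $\aleph_0$ then $nw(\tau_\B)=\aleph_0$ (Theorem \ref{network weight}), hence $nw(X,\B)=\aleph_0$, hence $d(X)=\aleph_0$ by \emph{(iii)}, and $d(\tau_\B)\le nw(\tau_\B)=\aleph_0$ gives $c(\tau_\B)=\aleph_0$, so $\B$ has a compact base by \emph{(1)}; if one of the four hereditary invariants is $\aleph_0$, coarsening passes it to $\tau_\B$, and then $c(\tau_\B)=\aleph_0$ (cellularity is dominated by hereditary density, and a hereditarily Lindel\"of space is ccc), so again $\B$ has a compact base. It remains to deduce that $X$ is separable in the hereditary cases: otherwise, for a suitable $\epsilon>0$ there is an uncountable $\epsilon$-separated $D\subseteq X$, and bump functions $f_d\in C(X)$ with $f_d(d)=1$ supported in the pairwise disjoint balls $B(d,\epsilon/2)$, together with the $\tau_\F$-open sets $\{g:g(d)>1/2\}$, exhibit $\{f_d:d\in D\}$ as an uncountable discrete subspace of $(C(X),\tau_\F)\subseteq(C(X),\tau_\B)\subseteq(C(X),\tau^s_\B)$, contradicting that $hd$ or $hL$ of this space is $\aleph_0$.

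The main obstacle is precisely this last step of \emph{(3)}: the chain $2^{<\delta(X,\B)}\le c\le d\le hd\le nw$ together with $hL\le nw$ does not separate ``$\B$ has a compact base'' from ``$X$ is separable'', since by a Hewitt--Marczewski--Pondiczery type argument (e.g.\ for a discrete $X$ of cardinality at most $2^{\aleph_0}$) $c(\tau_\B)$ and $d(\tau_\B)$ can remain countable while $X$ is non-separable. Separability of $X$ must therefore be extracted either from $nw(X,\B)\ge d(X)$, for the two $nw$ clauses, or from the explicit uncountable discrete subspace of $(C(X),\tau_\F)$ built from a spread-out $\epsilon$-net of $X$, for the four hereditary clauses; the remainder is bookkeeping with Theorems \ref{character}, \ref{weak_density}, \ref{network weight}, Corollaries \ref{nwcxcor}, \ref{weak_base}, Lemmas \ref{compcel}, \ref{compdens}, and the monotonicity of cardinal invariants under refinement of the topology.
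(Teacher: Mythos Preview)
Your argument is correct, and for items (1), (2), (4) it coincides with the paper's proof almost verbatim. The one substantive divergence is in item (3), in the step where separability of $X$ is extracted from $hd(\tau_\B)=\aleph_0$ or $hL(\tau_\B)=\aleph_0$.

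The paper does not build an auxiliary discrete subspace of $C(X)$. Instead it quotes the general inequalities $c\cdot t\le hd$ and $c\cdot\psi\le hL$ (Engelking 3.12.7), combines them with Theorem~\ref{pseudo} ($\psi(\tau_\B)=d(X,\B)$), Theorem~\ref{tightness} ($t(\tau_\B)=L(X,\B)$), and Proposition~\ref{bornology_invariants} ($d(X,\B)\le L(X,\B)$), so that either hereditary hypothesis forces $d(X,\B)=\aleph_0$; then, having already obtained $\B\subset\K$ from $c(\tau_\B)=\aleph_0$, it concludes $d(X)=d(X,\K)\le d(X,\B)=\aleph_0$. Your route---an uncountable $\epsilon$-separated set in a non-separable metric $X$, bump functions yielding an uncountable discrete family in $(C(X),\tau_\F)$, and the bound $s\le\min(hd,hL)$---is a valid elementary substitute that avoids Theorems~\ref{pseudo}, \ref{tightness} and Proposition~\ref{bornology_invariants}. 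The paper's approach is shorter and stays entirely inside the cardinal-function calculus already developed; yours is more hands-on and would work even without those earlier identifications of $\psi$ and $t$.
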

\begin{proof}
	\quad
	\begin{enumerate}
		\item From Theorem \ref{weak_density} we have that $\delta(X,\B)\leq 2^{<\delta(X,\B)}\leq c(\tau_\B)$ so if $c(\tau_\B)=\aleph_0$ then $\delta(X,\B)=\aleph_0$; i.e. $\B$ has a compact base.
		On the other hand if $\B$ has a compact base, then $\tau_\B=\tau_\B^s\subset\tau_\K$. From Lemma \ref{compcel} we have that $c(\tau_\K)=\aleph_0$ and so  $c(\tau_\B^s)=\aleph_0$. The rest follows from $c(\tau_\B)\leq c(\tau_\B^s)$.
		\item From Corollary \ref{weak_base} we have that $cf(\B)2^{<\delta(X,\B)}=w(\tau_\B)$ so if $w(\tau_\B)=\aleph_0$ then $cf(\B)=\delta(X,\B)=\aleph_0$; i.e. $\B$ has a countable compact base.
		On the other hand if  $\B$ has a countable compact base, then $\tau_\B=\tau_\B^s$ so $w(\tau_\B^s)=w(\tau_\B)=cf(\B)2^{<\delta(X,\B)}=\aleph_0$. The rest follows from $w(\tau_\B)\leq w(\tau_\B^s)$.
		\item We know that $c\psi\leq hL\leq nw$ and $ct\leq hd\leq nw$ (,see \cite[3.12.7]{engelking}) so all cardinal invariants in (3) are by Proposition \ref{bornology_invariants}, Theorems \ref{pseudo}, \ref{tightness} and \ref{weak_density} greater or equal to $d(X,\B)\delta(X,\B)$. If $d(X,\B)\delta(X,\B)=\aleph_0$ then $\B$ has a compact base so $\B\subset\K$ and so $\aleph_0=d(X,\B)\geq d(X,\K)=d(X)$; i.e. $X$ is separable.
		All of the cardinal invariants in (3) are also less or equal to $nw(\tau_\B^s)$. If $\B$ has a compact base then by Corollary \ref{nwcxcor} we have that $nw(\tau_\B^s)=nw(\tau_\B)=w(X)=d(X)$. Thus if $X$ is separable then $nw(\tau_\B^s)=\aleph_0$. The rest follows from the mentioned inequalities.
		\item If $d(\tau_\B)=\aleph_0$ then by (1) we have that $\B$ has a compact base. Since $d(\tau_\F)\leq d(\tau_\B)$ we have from Lemma \ref{compdens} that $X$ has a coarser separable metrizable topology.
		On the other hand if $\B$ has a compact base then $\tau_\B^s=\tau_\B\subset\tau_\K$. If $X$ has a coarser separable metrizable topology then by \ref{compdens} we have that $d(\tau_\K)=\aleph_0$ and thus $d(\tau_\B^s)=\aleph_0$. The rest follows from $d(\tau_\B)\leq d(\tau_\B^s)$.
	\end{enumerate}
\end{proof}
\textbf{Acknowledgements.} Both authors were supported by VEGA 2/0047/10 and  APVV-269-11.
\bibliographystyle{plain}
\bibliography{bornology}
\end{document}